\DeclareMathOperator{\Id}{Id}
\theoremstyle{definition}
\newtheorem{defn}[equation]{Definition}
\newtheorem{remark}[equation]{Remark}
\theoremstyle{plain}
\newtheorem{lemma}[equation]{Lemma}
\newtheorem{theorem}[equation]{Theorem}
\newtheorem{utheorem}{\textrm{\textbf{Theorem}}}
\newtheorem{ucor}[utheorem]{\textrm{\textbf{Corollary}}}
\newtheorem{question}[equation]{Question}
\newcommand{\R}{\mathbb{R}}
\newcommand{\Z}{\mathbb{Z}}
\newcommand{\C}{\mathbb{C}}
\newcommand{\F}{\mathbb{F}}
\newcommand\Pn{\mathbb{P}}
\title{}
\numberwithin{equation}{section}
\begin{document}

\title[Preservers forbidden from diagonal blocks]{Positivity preservers\\forbidden to operate on diagonal blocks}
\author{Prateek Kumar Vishwakarma}
\address{Department of Mathematics, Indian Institute of Science, Bangalore, India.}
\email{prateekv@iisc.ac.in}
\keywords{Entrywise positivity preservers, absolutely monotonic functions, forbidden blocks}

\subjclass[2010]{15B48, 26A21 (primary);
15A24, 15A39, 15A45, 30B10 (secondary)}

\begin{abstract}
The question of which functions acting entrywise preserve positive semidefiniteness has a long history, beginning with the Schur product theorem [\textit{Crelle} 1911], which implies that absolutely monotonic functions (i.e., power series with nonnegative coefficients) preserve positivity on matrices of all dimensions. A famous result of Schoenberg and of Rudin [\textit{Duke Math. J.} 1942, 1959] shows the converse: there are no other such functions.

Motivated by modern applications, Guillot and Rajaratnam [\textit{Trans. Amer. Math. Soc.} 2015] classified the entrywise positivity preservers in all dimensions, which act only on the off-diagonal entries. These two results are at ``opposite ends'', and in both cases the preservers have to be absolutely monotonic.

We complete the classification of positivity preservers that act entrywise except on specified ``diagonal/principal blocks'', in every case other than the two above. (In fact we achieve this in a more general framework.) This yields the first examples of dimension-free entrywise positivity preservers - with certain forbidden principal blocks - that are not absolutely monotonic.
\end{abstract}

\maketitle
\date{\today}

\section{Introduction}\label{Introduction}
Functions operating on a class of matrices which preserve a specified property of the class have been studied extensively in the literature. In this paper we discuss entrywise positivity preservers: functions that operate entrywise on matrices and preserve the class of positive semidefinite matrices in all dimensions. The search for such preservers began with the discovery of a product theorem due to Schur:

\begin{theorem}[Schur \cite{Schur}]\label{Schur}
Suppose $n\geq 1$ is an integer, and $A=(a_{ij}), B=(b_{ij})\in\C^{n\times n}$ are positive semidefinite matrices. Then $A \circ B:=(a_{ij} b_{ij})$ is positive semidefinite.
\end{theorem}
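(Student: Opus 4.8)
The plan is to exploit the Gram-matrix characterization of positive semidefiniteness. First I would recall that a Hermitian matrix $B$ is positive semidefinite if and only if its entries can be written as inner products $b_{ij} = \langle y_i, y_j \rangle$ for some vectors $y_1, \dots, y_n \in \C^m$; concretely, such a representation follows from the spectral decomposition $B = \sum_k \lambda_k v_k v_k^*$ with $\lambda_k \geq 0$, by taking $y_i$ to be the $i$-th coordinate slice of the matrix whose columns are $\sqrt{\lambda_k}\, v_k$. Writing $A$ in the same way as $a_{ij} = \langle x_i, x_j \rangle$, I would then isolate the one observation carrying the content: the inner product is multiplicative under tensor products, so
\[
a_{ij} b_{ij} = \langle x_i, x_j \rangle \, \langle y_i, y_j \rangle = \langle x_i \otimes y_i,\ x_j \otimes y_j \rangle.
\]
Thus $A \circ B$ is itself the Gram matrix of the vectors $x_i \otimes y_i$, and is therefore positive semidefinite.

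An alternative route, which I would likely favor in the write-up, avoids tensor products and makes the nonnegativity explicit. Fixing the spectral decomposition $B = \sum_k \lambda_k v_k v_k^*$, I would expand by bilinearity of the Hadamard product to get $A \circ B = \sum_k \lambda_k \, \bigl(A \circ v_k v_k^*\bigr)$. The key identity is that each rank-one Hadamard factor is a congruence: with $D_k = \diag((v_k)_1, \dots, (v_k)_n)$ one checks entrywise that
\[
A \circ (v_k v_k^*) = D_k A D_k^*,
\]
since the $(i,j)$ entry of each side is $(v_k)_i\, a_{ij}\, \overline{(v_k)_j}$. Because congruence preserves positive semidefiniteness and each $\lambda_k \geq 0$, the matrix $A \circ B$ is exhibited as a nonnegative combination of positive semidefinite matrices, hence positive semidefinite.

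There is no genuine obstacle here, as the result is classical; the only step with any substance is the passage from the entrywise product to a recognizable structure, namely either the tensor-product Gram form or the congruence $D_k A D_k^*$, and both reduce to a one-line entry comparison. I would present the congruence argument, as the explicit appearance of the nonnegative eigenvalues $\lambda_k$ foreshadows the role that nonnegative coefficients (the hallmark of absolutely monotonic functions) play throughout the entrywise-preserver theory discussed above.
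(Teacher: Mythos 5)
Your proposal is correct. The paper does not actually prove Theorem~\ref{Schur}; it is quoted as a classical result with a citation to Schur, so there is no in-paper argument to compare against. Both of your routes are standard and complete: the tensor/Gram argument and the congruence identity $A \circ (v_k v_k^*) = D_k A D_k^*$ (whose entrywise verification $(v_k)_i\, a_{ij}\, \overline{(v_k)_j}$ you state correctly), combined with the spectral decomposition $B = \sum_k \lambda_k v_k v_k^*$ with $\lambda_k \geq 0$, each yield the result with no gaps.
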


In particular, if $A$ is  positive semidefinite then so is $A^{\circ k}:=A \circ A \circ \dots \circ A =(a_{ij}^k)$ for all integers $k\geq 0$ under the convention that $0^0:=1$. This implies that monomials $x^k$, $k \geq 1$ preserve positivity when applied entrywise to any positive semidefinite matrix. Combining this with the properties of positive semidefinite matrices, we have the following:

\begin{lemma}[P\'{o}lya--Szeg\"{o} \cite{Polya-Szego}]\label{Polya-Szego}
Suppose $f(x):=\sum_{k\geq 0} c_{k}x^{k}$ is a power series with nonnegative coefficients $c_k$, that converges over $I\subseteq \R$. Then $f[A]:=(f(a_{ij}))$ is positive semidefinite for all positive semidefinite $A=(a_{ij})\in I^{n\times n}$ and all $n\geq 1$.
\end{lemma}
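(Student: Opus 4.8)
The plan is to reduce everything to the Schur product theorem by computing the relevant quadratic form directly. Fix an integer $n \geq 1$, a positive semidefinite matrix $A = (a_{ij}) \in I^{n\times n}$, and an arbitrary vector $v = (v_1,\dots,v_n)^{\top} \in \C^n$; it then suffices to show that $v^* f[A] v \geq 0$.

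First I would record that every Hadamard power $A^{\circ k}$ is positive semidefinite. For $k \geq 1$ this follows by applying Theorem \ref{Schur} inductively, starting from $A^{\circ 1} = A$; for $k = 0$ the matrix $A^{\circ 0}$ is the all-ones matrix $\mathbf{1}\mathbf{1}^{\top}$ (using the convention $0^0 := 1$), which is manifestly positive semidefinite. In particular, $v^* A^{\circ k} v \geq 0$ for every $k \geq 0$.

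Next, since each entry $a_{ij}$ lies in $I$ and the series $f(x) = \sum_{k \geq 0} c_k x^k$ converges on $I$, each $f(a_{ij})$ is a convergent sum, so $f[A]$ is well defined. I would then expand
\[
v^* f[A] v = \sum_{i,j=1}^{n} \overline{v_i}\, f(a_{ij})\, v_j = \sum_{i,j=1}^{n} \overline{v_i} \left( \sum_{k \geq 0} c_k a_{ij}^k \right) v_j .
\]
Because the outer sum ranges over only finitely many index pairs $(i,j)$ while each inner series converges, the two summations may be interchanged, giving
\[
v^* f[A] v = \sum_{k \geq 0} c_k \sum_{i,j=1}^{n} \overline{v_i}\, a_{ij}^k\, v_j = \sum_{k \geq 0} c_k \, \bigl( v^* A^{\circ k} v \bigr).
\]

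Finally, every summand on the right is nonnegative: $c_k \geq 0$ by hypothesis, and $v^* A^{\circ k} v \geq 0$ by the first step. Hence $v^* f[A] v \geq 0$, and since $v$ was arbitrary, $f[A]$ is positive semidefinite. The only point needing any care is the interchange of summations, but this is immediate as the sum over entries is finite; equivalently, one could note that each partial sum $\sum_{k=0}^{N} c_k A^{\circ k}$ is positive semidefinite (being a nonnegative combination of positive semidefinite matrices) and that the positive semidefinite cone is closed under entrywise limits. I expect no genuine obstacle here, as the statement is essentially a packaging of the Schur product theorem.
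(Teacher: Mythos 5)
Your proposal is correct and follows exactly the route the paper indicates: the paper gives no detailed proof of this lemma, merely noting that it follows from the Schur product theorem (Hadamard powers of a positive semidefinite matrix are positive semidefinite) combined with standard properties of positive semidefinite matrices, which is precisely what you carry out. The only details you add beyond the paper's sketch are the explicit quadratic-form computation and the (harmless, finite) interchange of sums, both of which are handled correctly.
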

The functions appearing in Lemma~\ref{Polya-Szego} (power series with nonnegative coefficients) form the class of \textit{absolutely monotonic} functions. Hence Lemma~\ref{Polya-Szego} shows that absolutely monotonic functions preserve positivity when applied entrywise to a positive semidefinite matrix of any dimension. This observation naturally led to the search of a function that is not absolutely monotonic, but still preserves positivity. Schoenberg \cite{Schoenberg} proved that there are no such continuous functions. This result was later strengthened by removing the continuity assumption.

\begin{theorem}[Schoenberg \cite{Schoenberg}, Rudin \cite{Rudin}, Christensen--Ressel \cite{Chressel}]\label{SchoenbergPlus}
Let $I=(-\rho,\rho)$ for $0<\rho\leq \infty$, and $f:I\to\R$ be a function. Then the following are equivalent:
\begin{itemize}
\item[1.] $f[A]:=(f(a_{ij}))$ is positive semidefinite for all positive semidefinite $A=(a_{ij}) \in I^{n\times n}$, for all $n \geq 1$.
\item[2.] $f(x)=\sum_{k \geq 0} c_k x^k$ for all $x\in I$, where $c_k\geq 0$ for all $k\geq 0$.
\end{itemize}
\end{theorem}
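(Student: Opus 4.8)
The plan is to prove the hard direction (1 $\Rightarrow$ 2), since Lemma~\ref{Polya-Szego} already gives (2 $\Rightarrow$ 1). The strategy I would take is the classical one via the Schoenberg correspondence together with a Hamburger-type moment argument. First I would reduce to the case $\rho=\infty$ by a scaling trick: if $f$ preserves positivity on $(-\rho,\rho)^{n\times n}$ for all $n$, then for any $0<r<\rho$ the function $x\mapsto f(rx)$ preserves positivity on $(-1,1)$, so it suffices to characterize preservers on $(-1,1)$ and then patch together the resulting power series on overlapping subintervals (they must agree by the identity theorem). Within a fixed interval I would apply $f$ entrywise to rank-one positive semidefinite matrices $A=(x_ix_j)$, which forces $\bigl(f(x_ix_j)\bigr)$ to be positive semidefinite for all choices of $x_i$ in the interval.

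Next I would invoke Schoenberg's insight that positivity of $\bigl(f(x_ix_j)\bigr)$ for \emph{all} finite configurations on the sphere/interval is precisely the statement that $f$ is a positive definite function in the sense of harmonic analysis on the relevant domain. The decisive step is to upgrade this to an integral representation: I would use a limiting argument letting the dimension $n\to\infty$ together with the theory of positive definite functions (a Bochner/Hamburger-style theorem) to produce a positive measure $\mu$ with
\begin{equation}
f(x)=\int_0^\infty x^{t}\,d\mu(t)\quad\text{or equivalently}\quad f(x)=\sum_{k\ge 0}c_k x^k,\ c_k\ge 0.
\end{equation}
The cleanest route is to differentiate: I expect to show that all the "divided differences" or discrete derivatives of $f$ are nonnegative by testing against carefully chosen positive semidefinite matrices, so that $f$ is absolutely monotonic on $(0,\rho)$ in the sense that $f^{(k)}(x)\ge 0$ for all $k$ and all $x>0$. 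By Bernstein's theorem on absolutely monotonic functions this yields a convergent power series with nonnegative coefficients on $(0,\rho)$, and a separate parity/evenness argument handles $x<0$ to recover the full expansion on $(-\rho,\rho)$.

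The main obstacle will be removing the continuity hypothesis, i.e. obtaining the representation without assuming $f$ is continuous a priori (this is exactly the content that distinguishes Rudin's and Christensen--Ressel's strengthening from Schoenberg's original theorem). To handle this I would first establish that preservation of positivity on $2\times 2$ and $3\times 3$ matrices already forces strong regularity: testing on $2\times 2$ matrices $\begin{pmatrix} a & b \\ b & c\end{pmatrix}$ gives $f(a)f(c)\ge f(b)^2$ along with sign and monotonicity constraints, and pushing this through yields that $f$ is nonnegative, multiplicatively midpoint-convex in a suitable sense, and locally bounded. Boundedness plus the midpoint-type inequality would then force measurability-free continuity via a Sierpiński-type argument, after which the moment representation from the previous paragraph applies. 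I anticipate that verifying the absolute monotonicity inequalities for every order $k$ uniformly—rather than just low-order cases—will be the delicate technical heart, requiring an explicit family of test matrices (for instance rank-one matrices built from geometric progressions) whose entrywise images have a determinant sign that is controlled by the $k$-th finite difference of $f$.
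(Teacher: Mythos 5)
First, a point of order: the paper does not prove Theorem~\ref{SchoenbergPlus} at all --- it is quoted from Schoenberg, Rudin, and Christensen--Ressel as a known ingredient (and later invoked as a black box in Sections~\ref{ProofofT1} and~\ref{ProofofT3}), so there is no in-paper proof to compare yours against. Judged on its own terms, your proposal is a reasonable roadmap of the classical argument: the reduction to $(-1,1)$ by scaling and patching is sound, the easy direction is correctly delegated to Lemma~\ref{Polya-Szego}, and the regularity step (multiplicative midconvexity from $2\times 2$ minors plus local boundedness forcing continuity) is genuinely how the continuity hypothesis is removed in the literature. But as written it is a plan rather than a proof: every hard step is announced (``I would invoke'', ``I expect to show'', ``I anticipate'') rather than carried out, and two of the deferred steps are real gaps rather than routine verifications.

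Concretely: (i) the claim that ``a separate parity/evenness argument handles $x<0$'' does not work as stated. The function $f$ is neither even nor odd, and the serious content of the theorem on $(-\rho,\rho)$ --- as opposed to Vasudeva's theorem on $(0,\rho)$, cf.\ Theorem~\ref{VasudevaPlus} --- is precisely to show that the power series produced on the positive semi-axis also represents $f$ at negative arguments. This needs further matrix tests (for instance $|f(-x)|\le f(x)$ from $2\times 2$ matrices, and then higher-dimensional constraints in the style of Rudin to pin $f$ down on $(-\rho,0)$); nothing in your sketch supplies that argument. (ii) The representation $f(x)=\int_0^\infty x^t\,d\mu(t)$ is not ``equivalent'' to a power series with nonnegative coefficients: an integral over arbitrary real exponents is strictly more general, and excluding non-integer powers (which preserve positivity in each fixed dimension above a threshold exponent, by FitzGerald--Horn, but not in all dimensions) is part of what must be proved, not a reformulation. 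The derivative route you mention second --- all $f^{(k)}\ge 0$ on $(0,\rho)$, then Bernstein's theorem --- is the one that actually closes this, but the inductive mechanism that makes it run (a FitzGerald--Horn-type lemma producing, from a preserver in dimension $n+1$, a difference quotient or derivative that preserves positivity in dimension $n$) is exactly the ``delicate technical heart'' you leave open. So the proposal identifies the right landmarks but does not yet contain a proof of the implication (1)$\implies$(2).
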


We should mention that Rudin \cite{Rudin}, in addition to strengthening the result in \cite{Schoenberg}, conjectured that the entrywise positivity preservers over the class of complex positive semidefinite matrices are similar to those discovered previously, and coincide with the functions of the form:

\begin{align}\label{complexseries}
\sum_{m,k\geq 0} c_{m,k}z^{m}\overline{z}^{k}, \qquad \mbox{where } c_{m,k}\geq 0 \mbox{ for all }  m,k\geq 0.
\end{align}
In this article, we call a function of the form $c \cdot z^m\overline{z}^k$ with $c\geq 0$ a \textit{Herz function}. Observe that the conjugation map $z \mapsto \overline{z}$ when applied entrywise to a Hermitian matrix results in its transpose, which has the same spectrum. Hence the entrywise conjugation operation preserves the class of (complex) positive semidefinite matrices. The Schur product theorem combined with this shows that the functions in \eqref{complexseries} belong to such class of entrywise positivity preservers. This is an observation, analogous to Lemma~\ref{Polya-Szego}, in the complex setting. The reverse inclusion (i.e. Rudin's conjecture) was proved by Herz \cite{Herz} for positive semidefinite matrices having entries in the open unit disc. This result was later reproved for all the discs centered at the origin of positive or infinity radius.

\begin{theorem}[Herz \cite{Herz}, FitzGerald--Micchelli--Pinkus \cite{FMP}]\label{HerzPlus}
Let $I=D(0,\rho)$ for $0<\rho\leq \infty$, and $f:I \to \C$ be a function. Then the following are equivalent:
\begin{itemize}
\item[1.]
$f[A]:=(f(a_{ij}))$ is positive semidefinite for all positive semidefinite $A=(a_{ij})\in I^{n\times n}$, for all $n\geq 1$.
\item[2.]
$f(z)=\sum_{m,k\geq 0} c_{m,k}z^{m}\overline{z}^k$ for all $z\in I$, where $c_{m,k}\geq 0$ for all $m,k\geq 0$. 
\end{itemize}
\end{theorem}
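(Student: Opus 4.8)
\emph{Plan of proof.} The implication $(2)\Rightarrow(1)$ is the ``easy'' Schur-product direction already indicated in the text: if $A\succeq 0$ is Hermitian then $\bar A=A^{\mathsf T}\succeq 0$, so each Herz monomial $z^m\overline z^k$ produces $A^{\circ m}\circ\bar A^{\circ k}\succeq 0$ by Theorem~\ref{Schur}, and nonnegative combinations of positive semidefinite matrices are positive semidefinite. So the content is $(1)\Rightarrow(2)$, for which I would first record two structural constraints forced by small test matrices. Testing on $\left(\begin{smallmatrix}|z| & z\\ \overline z & |z|\end{smallmatrix}\right)\succeq 0$ shows that $f[\,\cdot\,]$ can be Hermitian only if $f(\overline z)=\overline{f(z)}$ for every $z\in D(0,\rho)$, while the diagonal entries force $f\ge 0$ on $[0,\rho)$. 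The theorem then splits into two tasks: (i) show that $f$ admits a convergent expansion $f(z)=\sum_{m,k\ge 0}c_{m,k}z^m\overline z^k$ on the disc, and (ii) show each $c_{m,k}\ge 0$.

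Granting (i), I would dispatch (ii) by a rank-one, order-of-vanishing argument that is the conceptual heart and genuinely uses the complex (bigraded) structure. Fix a target pair $(M,K)$, take $n$ large, pick $\bu=(u_1,\dots,u_n)\in\C^n$, and test the hypothesis on $A_t:=t\,\bu\bu^*$, whose entries $t\,u_i\overline{u_j}$ lie in $D(0,\rho)$ for small $t>0$. Expanding entrywise,
\[
f[A_t]=\sum_{m,k\ge 0}c_{m,k}\,t^{m+k}\,\bu^{(m,k)}(\bu^{(m,k)})^{*},\qquad \bu^{(m,k)}:=\bigl(u_i^{m}\,\overline{u_i}^{\,k}\bigr)_{i=1}^{n},
\]
so that $\langle f[A_t]\bv,\bv\rangle=\sum_{m,k}c_{m,k}\,t^{m+k}\,|\langle \bv,\bu^{(m,k)}\rangle|^2\ge 0$ for every $\bv\in\C^n$. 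Because the functions $z\mapsto z^m\overline z^k$ are linearly independent on $\C$, generic sample points $u_i$ make the finitely many vectors $\{\bu^{(m,k)}:m+k\le M+K\}$ linearly independent; choosing $\bv$ orthogonal to all of them except $\bu^{(M,K)}$, with $\langle\bv,\bu^{(M,K)}\rangle\ne 0$, and dividing by $t^{M+K}$, every term of degree $>M+K$ is $o(1)$ as $t\to 0^+$, leaving $c_{M,K}\,|\langle\bv,\bu^{(M,K)}\rangle|^2\ge 0$. Hence $c_{M,K}$ is forced to be real and nonnegative. In the real Schoenberg setting these monomial vectors would see only the total degree $m+k$; it is precisely the dependence of $u_i^m\overline{u_i}^{\,k}$ on \emph{both} $m$ and $k$ that lets the complex argument resolve the individual coefficients.

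The main obstacle is therefore (i): proving that an arbitrary (not assumed continuous) preserver is real-analytic in $(z,\overline z)$, so that the double series exists at all. Here I would exploit the $U(1)$ phase covariance special to the complex setting: diagonal unitary congruence $A\mapsto D_\psi A D_\psi^*$ with $D_\psi=\diag(e^{i\psi_1},\dots,e^{i\psi_n})$ preserves positive semidefiniteness and leaves each entry's modulus unchanged, hence preserves membership in $D(0,\rho)$. Thus $\bigl(f(e^{i(\psi_i-\psi_j)}a_{ij})\bigr)\succeq 0$ for all $\psi\in\R^n$, and averaging these matrices over the torus against characters $e^{ip\psi}$ projects $f$ onto its angular Fourier modes $g_p(r)$. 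Restricting instead to real symmetric positive semidefinite matrices, Theorem~\ref{SchoenbergPlus} shows that $f|_{(-\rho,\rho)}$ is absolutely monotonic, hence real-analytic, pinning the radial behaviour; combined with the angular mode structure and with standard smoothing/differentiation arguments from the real theory, this promotes regularity to smoothness in all directions and assembles the convergent expansion $f(z)=\sum_p g_p(r)e^{ip\theta}=\sum_{m,k}c_{m,k}z^m\overline z^k$.

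I expect the genuine difficulty to be concentrated in two places within step (i). First, the mode projection produces a diagonal mismatch: the off-diagonal part reproduces the clean mode-$p$ kernel, whereas the diagonal returns $f(a_{ii})$; I would absorb this by working with constant-diagonal matrices and a scaling and limiting argument. Second, removing the continuity hypothesis altogether is delicate, and I would adapt verbatim the bootstrapping that upgrades Schoenberg's theorem to its hypothesis-free form of Rudin and Christensen--Ressel. Once (i) is in hand, step (ii) and the easy direction complete the equivalence.
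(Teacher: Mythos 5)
First, a point of comparison: the paper does not prove Theorem~\ref{HerzPlus} at all. It is imported as a known result of Herz and of FitzGerald--Micchelli--Pinkus and used as a black box (for instance in the proofs of Theorems~\ref{T1} and~\ref{T3}). So there is no in-paper proof to measure your argument against; your proposal has to stand on its own.

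On its merits: the direction $(2)\Rightarrow(1)$ is correct and complete, and your step (ii) --- extracting $c_{M,K}\geq 0$ from rank-one tests $t\,\bu\bu^{*}$, using linear independence of the vectors $\bu^{(m,k)}$ for generic $u_i$ and division by $t^{M+K}$ --- is the standard and correct device, modulo the routine check that the tail over $m+k>M+K$ is $o(t^{M+K})$, which needs absolute convergence of the series on a subdisc. The genuine gap is step (i). All of the difficulty in the theorem is concentrated in showing that a completely arbitrary $f$ satisfying (1), with no continuity or measurability assumed, admits a convergent expansion in $z$ and $\overline{z}$; your treatment of this is a plan rather than a proof. Specifically: (a) averaging the matrices $\bigl(f(e^{i(\psi_i-\psi_j)}a_{ij})\bigr)$ against characters of the torus already presupposes that $f$ is measurable in the angular variable, which is precisely the kind of regularity you have not yet established; (b) absolute monotonicity of $f$ on $(-\rho,\rho)$ via Theorem~\ref{SchoenbergPlus} controls $f$ only on the real axis, and passing from that plus ``angular mode structure'' to a jointly convergent double series on the whole disc is a substantial step, not a formality; (c) the diagonal mismatch you flag is not cosmetic --- the averaged matrix is not of the form $g_p[A]$ --- and the promised ``scaling and limiting argument'' would need to be exhibited. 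As written, step (i) defers the central difficulty to ``standard smoothing/differentiation arguments'' that are not standard in this generality; to close the proof you would either have to carry out the bootstrap in detail (continuity, then smoothness via difference quotients and Wirtinger derivatives, then the expansion, as in FitzGerald--Micchelli--Pinkus), or simply cite the result, as the paper does.
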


In a parallel direction, Vasudeva \cite{Vasudeva} proved that the functions preserving the class of doubly nonnegative matrices (i.e. positive semidefinite matrices with positive entries) are exactly the power series with nonnegative Maclaurin coefficients. This result was later generalized and we have the following:

\begin{theorem}[Vasudeva \cite{Vasudeva}, Guillot--Khare--Rajaratnam \cite{GKR-Trans}]\label{VasudevaPlus}
Let $I=(0,\rho)$ or $I=[0,\rho)$ for $0<\rho\leq \infty$, and $f:I\to \R$ be a function. Then the following are equivalent.
\begin{itemize}
\item[1.] $f[A]:=(f(a_{ij}))$ is positive semidefinite for all positive semidefinite $A=(a_{ij})\in I^{n\times n}$, for all $n \geq 1$.
\item[2.] $f(x)=\sum_{k \geq 0} c_k x^k$ for all $x\in I$, where $c_k\geq 0$ for all $k\geq 0$.
\end{itemize}
\end{theorem}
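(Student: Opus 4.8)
The implication $(2)\Rightarrow(1)$ is immediate from Lemma~\ref{Polya-Szego}, so the plan is to establish $(1)\Rightarrow(2)$. The strategy mirrors the one behind Theorem~\ref{SchoenbergPlus}: extract successively stronger regularity of $f$ by feeding positivity preservation increasingly large matrices, and then invoke Bernstein's characterization of absolutely monotonic functions. The feature to track throughout is that here \emph{every} test matrix must have strictly positive entries, so only positive rank-one backbones are available.

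First I would read off the elementary constraints from $1\times1$ and $2\times2$ matrices. Applying $(1)$ to $[a]$ for $a\in I$ gives $f\ge0$ on $I$. Applying it to the positive semidefinite matrix $\left(\begin{smallmatrix}x&y\\y&x\end{smallmatrix}\right)$ with $x\ge y>0$ in $I$ forces $f(x)^2\ge f(y)^2$, whence $f$ is nondecreasing. Applying it to the rank-one matrix $\left(\begin{smallmatrix}a&\sqrt{ab}\\\sqrt{ab}&b\end{smallmatrix}\right)$ forces $f(\sqrt{ab})^2\le f(a)f(b)$; equivalently $u\mapsto\log f(e^u)$ is midpoint convex where $f>0$. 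Being also locally bounded (by monotonicity), this function is genuinely convex, and hence $f$ is continuous on $I$, with one-sided continuity at $0$ in the case $I=[0,\rho)$.

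The core step is to promote continuity to $C^\infty$ smoothness with all derivatives nonnegative. For this I would use the FitzGerald--Horn differentiation device. Given that $f$ preserves positivity in \emph{all} dimensions, one constructs, for each order $k$ and each positive semidefinite $B$ with entries in $I$, a one-parameter family of positive semidefinite matrices, built from positive rank-one pieces plus a scaled copy of $B$, whose image under $f$ recovers, after a limit, the $k$-th order divided difference of $f$ on the entries of $B$. Closedness of the positive semidefinite cone then shows that this divided difference, and hence the derivative $f^{(k)}$, is again a positivity preserver, in particular nonnegative. Each differentiation consumes one matrix dimension, which is exactly why preservation in every dimension is needed; iterating yields $f\in C^\infty(0,\rho)$ with $f^{(k)}\ge0$ for all $k\ge0$.

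To conclude, a smooth function on $(0,\rho)$ all of whose derivatives are nonnegative is absolutely monotonic, so Bernstein's theorem gives $f(x)=\sum_{k\ge0}c_kx^k$ on $(0,\rho)$ with $c_k=\tfrac{1}{k!}\lim_{x\to0^+}f^{(k)}(x)\ge0$; continuity supplies the value at $x=0$ when $I=[0,\rho)$, establishing $(2)$. I expect the genuine obstacle to be this differentiation step: designing the matrix families that encode successive derivatives using \emph{only} positive-entry positive semidefinite matrices, and controlling the limits carefully enough that the positive semidefinite property survives throughout.
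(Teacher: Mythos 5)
The paper does not actually prove Theorem~\ref{VasudevaPlus}: it is quoted as a known result of Vasudeva and Guillot--Khare--Rajaratnam, so there is no internal proof to compare against. Judged on its own terms, your outline follows the standard route from that literature: the $2\times 2$ tests give nonnegativity, monotonicity and multiplicative mid-convexity (hence continuity on the open interval); the FitzGerald--Horn/Horn divided-difference machinery upgrades this to $f\in C^\infty$ with $f^{(k)}\geq 0$ for all $k$; and Bernstein's theorem converts absolute monotonicity into a power series with nonnegative coefficients. This plan is sound, with the caveat that essentially all of the technical weight sits inside the ``differentiation device'' you describe in one sentence: making it precise requires both the explicit test families (rank-one perturbations along geometric progressions, so that the $k$-th finite difference of $f$ appears as a quadratic form $v^{\top}f[A_{\epsilon}]v\geq 0$) and a Boas--Widder-type higher-order convexity theorem to pass from nonnegative divided differences to genuine differentiability; you correctly identify this as the hard part but do not supply it.

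Two concrete gaps. First, the dichotomy behind ``convex where $f>0$'': if $f(a)=0$ for a single $a\in(0,\rho)$, then $f(\sqrt{ab})^{2}\leq f(a)f(b)$ forces $f\equiv 0$ on $(0,\sqrt{a\rho})$, and iterating forces $f\equiv 0$ on all of $(0,\rho)$; without this observation your conclusion that $f$ is continuous on all of $I$ does not follow. Second, and more seriously, for $I=[0,\rho)$ the claim that ``continuity supplies the value at $x=0$'' is false as stated: every test matrix with strictly positive entries avoids $f(0)$ entirely, and the $2\times2$ tests only give $f(0)\leq c_{0}:=\lim_{x\to 0^{+}}f(x)$, so continuity on $(0,\rho)$ cannot pin down $f(0)$. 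One needs a matrix with an off-diagonal zero whose positivity constrains that entry from below, e.g.\ $A_{\epsilon}:=\epsilon\bigl(\begin{smallmatrix}2&1&0\\1&1&1\\0&1&2\end{smallmatrix}\bigr)\in\Pn_{3}([0,\rho))$; then
\begin{align*}
\lim_{\epsilon\to 0^{+}}\det f[A_{\epsilon}]=-c_{0}\,\bigl(c_{0}-f(0)\bigr)^{2}\geq 0,
\end{align*}
which together with $0\leq f(0)\leq c_{0}$ forces $f(0)=c_{0}$. Adding these two repairs, your route is a correct reconstruction of the standard proof.
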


To summarize: the functions operating entrywise and preserving the class of all positive semidefinite matrices with entries in $I$ are exactly the class of functions in \eqref{complexseries} over $I$, for $I=D(0,\rho), (-\rho,\rho),(0,\rho)$ and $[0,\rho)$.

The study of such entrywise positivity preservers has attracted recent attention owing to its applicability in big data analysis (for instance see \cite{GKR-JMAA, GKR-JCTA, GKR-Trans, Guillot-Rajaratnam-Trans}). One such instance that motivated our research is the work of Guillot--Rajaratnam \cite{Guillot-Rajaratnam-Trans}. They revisited and extended Theorem~\ref{SchoenbergPlus} by classifying those entrywise positivity preservers that operated only on the off-diagonal entries; we denote this operation by $f_{*}[A]$ for a square matrix $A:=(a_{ij})$ and define it formally as 
\begin{align*}
(f_{*}[A])_{ij}:=
\begin{cases}
a_{ij} & \mbox{ if }i=j, \\
f(a_{ij}) & \mbox{ otherwise}.\\ 
\end{cases}
\end{align*}

Guillot--Rajaratnam found that these preservers, like in Theorem~\ref{SchoenbergPlus}, are necessarily absolutely monotonic. More precisely:

\begin{theorem}[Guillot--Rajaratnam \cite{Guillot-Rajaratnam-Trans}]\label{Guillot-Rajaratnam}
Let $I=(-\rho,\rho)$ for $0<\rho\leq \infty$, and $f:I\to \R$ be a function. Then the following are equivalent:
\begin{itemize}
\item[1.]
$f_{*}[A]$ is positive semidefinite for all positive semidefinite $A\in I^{n\times n}$, for all $n \geq 1$.
\item[2.]
$f(x)=\sum_{k\geq 1} c_{k}x^{k}$ and $|f(x)|\leq |x|$ for all $x\in I$, where $c_{k}\geq 0$ for all $k\geq 0$.
\end{itemize}
\end{theorem}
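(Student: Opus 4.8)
The plan is to prove the two implications separately, with essentially all of the content residing in $(1)\Rightarrow(2)$. For the easy direction $(2)\Rightarrow(1)$, I would exploit that $f[A]$ (the full entrywise application) and $f_{*}[A]$ agree off the diagonal and differ only on it, so that $f_{*}[A]=f[A]+\diag\bigl(a_{ii}-f(a_{ii})\bigr)_{i}$. Since $f(x)=\sum_{k\geq 1}c_kx^k$ has nonnegative coefficients, Lemma~\ref{Polya-Szego} gives $f[A]\succeq 0$. The diagonal entries $a_{ii}$ of a positive semidefinite matrix are nonnegative, and the bound $|f(x)|\leq|x|$ gives $f(a_{ii})\leq|f(a_{ii})|\leq a_{ii}$, so the diagonal correction is a nonnegative diagonal matrix. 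Hence $f_{*}[A]$ is a sum of two positive semidefinite matrices, and we are done.

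For $(1)\Rightarrow(2)$ I would assemble two ingredients. The first is the magnitude bound. For $b\in I$ consider the boundary rank-one matrix $\bigl(\begin{smallmatrix}|b| & b\\ b & |b|\end{smallmatrix}\bigr)$, which is positive semidefinite with all entries in $I$; applying $f_{*}$ fixes the diagonal and replaces $b$ by $f(b)$, and positive semidefiniteness forces $f(b)^2\leq|b|^2$, i.e.\ $|f(b)|\leq|b|$. In particular $f(0)=0$.

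The second ingredient reduces the problem to Theorem~\ref{SchoenbergPlus} by an inflation trick. Given positive semidefinite $A=(a_{ij})\in I^{n\times n}$ and an integer $N\geq 2$, form the $Nn\times Nn$ matrix $\widetilde A:=J_N\otimes A$, where $J_N$ is the all-ones matrix; being a Kronecker product of positive semidefinite matrices it is positive semidefinite, and its entries are exactly those of $A$, hence lie in $I$. The point is that the diagonal blocks of $\widetilde A$ become $f_{*}[A]$ under $f_{*}$, while every off-diagonal block becomes the \emph{full} entrywise application $F:=f[A]$, so $f_{*}[\widetilde A]=I_N\otimes D+J_N\otimes F$ with $D:=f_{*}[A]-F=\diag\bigl(a_{ii}-f(a_{ii})\bigr)$. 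Diagonalizing $J_N$ (eigenvalue $N$ with multiplicity one, eigenvalue $0$ with multiplicity $N-1$) shows that $f_{*}[\widetilde A]\succeq 0$ is equivalent to $D\succeq 0$ and $D+NF\succeq 0$. Dividing the latter inequality by $N$ and letting $N\to\infty$ yields $F=f[A]\succeq 0$, since the positive semidefinite cone is closed. Thus $f$ applied entrywise preserves positivity on $I^{n\times n}$ for every $n$, and Theorem~\ref{SchoenbergPlus} gives $f(x)=\sum_{k\geq 0}c_kx^k$ with all $c_k\geq 0$; combining with $f(0)=0$ forces $c_0=0$, while the magnitude bound is already in hand.

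The main obstacle is the inflation step: one has to see that stacking copies of $A$ into the off-diagonal blocks of $\widetilde A$ compels the off-diagonal-only operator $f_{*}$ to reproduce the full entrywise operator $f[A]$ in those blocks, and then that the large-$N$ limit washes out the diagonal discrepancy $D$, reducing the whole statement to Schoenberg--Rudin. Everything else is routine $2\times 2$ testing and the elementary block diagonalization of $I_N\otimes D+J_N\otimes F$.
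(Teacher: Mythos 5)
Your proposal is correct, and it is essentially the same argument the paper itself adapts from Guillot--Rajaratnam in the proof of Lemma~\ref{T1L1}: the theorem in question is only cited there, but the key step --- inflating $A$ to $\mathbf{1}_N\otimes A$ so that $f_{*}$ is forced to act as the full entrywise map $f[-]$ on the off-diagonal blocks, and then letting $N\to\infty$ to conclude $f[A]\succeq 0$ and invoke Theorem~\ref{SchoenbergPlus} --- is identical. The only cosmetic difference is that you diagonalize $J_N$ explicitly to isolate the block $D+NF$, whereas the paper reaches the same bound $\lambda_{\min}(f[A])\geq -\tfrac{1}{N}\lambda_{\max}(D)$ via Weyl's inequality.
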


In this paper, we explain that the work of Schoenberg (and others) and Guillot--Rajaratnam are two ``extreme'' cases among other possibilities. We do so essentially by forbidding the entrywise functions $f$ from operating on ``diagonal/principal blocks''. For instance in Theorem~\ref{SchoenbergPlus}, $f$ is acting on all the entries in the matrices, which means it is not forbidden from any entry or, in particularly, from any diagonal/principal block. However in Theorem~\ref{Guillot-Rajaratnam}, $f$ is operating only on the off-diagonal entries, i.e. it is forbidden to operate on all $1\times 1$ diagonal blocks. Similarly, one can ask for the preservers $f$ which are forbidden in more generality: for example, forbidden to operate on some $1\times 1$ diagonal blocks but not all of them, or forbidden to operate on some $k\times k$ diagonal/principal blocks, etc. 

This idea makes it possible to unify the two different-looking results (Theorem~\ref{SchoenbergPlus} and Theorem~\ref{Guillot-Rajaratnam}) into one framework. In this process of unification, we provide dimension-free, non-absolutely monotonic positivity preservers when $f$ is forbidden from acting on certain diagonal/principal blocks. To our knowledge, this is the first time that non-absolutely monotonic functions are found to be preserving positivity in all dimensions, when operated in a certain way.

The remainder of this paper is organized as follows. The next section introduces the required notations and states our main results and some of the corresponding key features. Section~\ref{ProofofT1} proves the results when $f$ is forbidden only from $1\times 1$ diagonal blocks and the final two sections prove the results when $f$ is forbidden from larger diagonal/principal blocks.

\section{Main Results over Complex Disc Domains}\label{Mainresults}
In this section we state our main results over complex disc domains (to be proved below); see Table~\ref{Table1} for a summary of our results. Define $\Pn_n(I):=\Pn_n\cap I^{n\times n}$ for $I\subseteq \C$, where $\Pn_n$ denotes the class of all $n\times n$ (Hermitian) positive semidefinite matrices, and define $[n]:=\{1,2,\dots,n\}$, for all integers $n\geq 1$. Let $\Id_{n}$ denote the $n\times n$ identity matrix, and define constant functions $\mathbf{0}:z \mapsto 0$ and $\mathbf{1}:z \mapsto 1$ for all $z\in \C$. We also use $\mathbf{0}$ and $\mathbf{1}$ to denote the matrices with all entries zero and one respectively; the dimension should be clear from the context.

\begin{defn}\label{Def1}
Let $I \subseteq \C$ and $f:I \to \F$ be a function, where $\F=\C$ if $I \not \subseteq \R$, otherwise $\F=\R$. Suppose $T_n\subseteq 2^{[n]}$ for all integers $n\geq 1$. We define the entrywise operation,
\begin{align*}
f_{T_n}[-]:I^{n\times n}\to \C^{n\times n},
\end{align*}
where 
\begin{align*}
(f_{T_n}[A])_{ij}:=\begin{cases}
a_{ij} & \mbox{ if } i,j\in U \mbox{ for some }U\in T_n, \\
f(a_{ij}) & \mbox{ otherwise,}\\
\end{cases}
\end{align*}
$\mbox{ for all } A=(a_{ij}) \in I^{n\times n}$. We use $f_{*}[-]$ and $f[-]$ to denote $f_{T_n}[-]$ when $T_n=\{\{j\}:j\in [n]\}$ and $T_n=\emptyset$ respectively.
\end{defn}
Our goal in this paper is to resolve the following problem:
\begin{question}\label{Question}
Let $I\subseteq\C$, and $\mathbb{F}=\C$ if $I\not\subseteq\R$, otherwise $\mathbb{F}=\R$. Let $T_n\subseteq 2^{[n]}$ for all $n\geq 1$. Characterize the functions $f:I\to \mathbb{F}$ such that for any specified sequence $(T_n)_{n\geq 1}$, the action $f_{T_n}[-]$ preserves the positivity of the matrices in $\Pn_n(I)$, for all $n\geq 1$.
\end{question}

As discussed above, Theorems~\ref{SchoenbergPlus}, \ref{HerzPlus}, \ref{VasudevaPlus} and Theorem~\ref{Guillot-Rajaratnam} answer Question~\ref{Question} for the two ``extreme'' cases where $T_n$ is either empty for all $n$ or $\{\{j\}:j\in [n]\}$ for all $n$. We obtain a complete classification for any choice of $(T_n)_{n\geq 1}$ below. One of the curious features of this classification is that the above two ``extreme'' cases yield absolutely monotonic functions, which is not always so in the ``intermediate'' cases. That is, we obtain in this paper novel, non-absolutely monotonic families of dimension-free positivity preservers, for certain cases of $(T_n)_{n\geq 1}$.

Answering Question~\ref{Question} also brings the two extreme results (Theorem~\ref{SchoenbergPlus} and Theorem~\ref{Guillot-Rajaratnam}) together under one roof. In fact, this classification will follow from stronger results proved below. We state these results after introducing the required new notations. See Corollary~\ref{The-corollary} or the last two columns of Table~\ref{Table1} for the classifications that answer Question~\ref{Question}.

\begin{defn}\label{Def2}
Let $I \subseteq \C$ and $g,f:I \to \F$ be functions, where $\F=\C$ if $I \not\subseteq \R$, otherwise $\F=\R$. Suppose $T_n\subseteq 2^{[n]}$ for all integers $n\geq 1$. 
\begin{enumerate}
\item[1.]
We define the entrywise operation,
\begin{align*}
(g,f)_{T_n}[-]:I^{n\times n}\to {\C}^{n\times n},
\end{align*}
where 
\begin{align*}
((g,f)_{T_n}[A])_{ij}:=\begin{cases}
g(a_{ij}) & \mbox{ if } i,j\in U \mbox{ for some }U\in T_n, \\
f(a_{ij}) & \mbox{ otherwise,}\\
\end{cases}
\end{align*}
$\mbox{ for all } A=(a_{ij}) \in I^{n\times n}$.

\item[2.]
Let $(g,f)_{*}[-]$ denote $(g,f)_{T_n}[-]$ when $T_n=\{\{j\}:j\in [n]\}$.

\item[3.]
{(}Nonempty and empty sequences.{)}
If there exists an integer $N\geq 2$ such that $T_{N}\neq \emptyset $, then we say that the sequence $(T_n)_{n\geq 1}$ is a nonempty sequence, otherwise we call it empty.
\end{enumerate}
 
\end{defn}

We work in a general setting, and characterize the tuples $(g,f)$ such that the operations $(g,f)_{T_n}[-]$ preserve the positivity of the matrices in $\Pn_n(I)$, for all $n\geq 1$. Clearly, Question~\ref{Question} is the special case when $g\equiv\Id$. Before moving any further, we collect some remarks and natural assumptions on the sequence $(T_n)_{n\geq 1}$ and on $\Pn_n(I)$ for $I\subseteq\C$.

\begin{remark}\label{Sequencerem}
\item[{1.}]
The nature of Question~\ref{Question} allows to assume $U\not\subseteq V$ for all $U\neq V\in T_n$ for all $n \geq 1$, and that $T_n=\{\emptyset\}$ and $T_n=\emptyset$ are equal. We will assume this henceforth without further mention.
\item[{2.}] 
Since we are characterizing dimension-free preservers, we can assume that $T_n\neq\{[n]\}$ for infinitely many $n\geq 1$. Thus, as there is no loss of any generality, we assume that $T_{n}\neq \{[n]\}$ for all $n\geq 2$, in the results related to dimension-free classifications whenever needed.
\item[{3.}]
Given $I\subseteq \C$, the class $\Pn_n(I)$ is non-trivial only if $I\cap\R_{> 0}\neq \emptyset$, and nonempty only if $I\cap\R_{\geq 0}\neq \emptyset$.  We thus assume the corresponding necessities as and when needed.
\end{remark}

We now state the first result of this paper. See Table~\ref{Table1} for a summary of the classifications of $(g,f)$ according to the sequence $(T_n)_{n\geq 1}$.

\begin{utheorem}\label{T1}
Let $I=D(0,\rho)$ for $0<\rho\leq \infty$, and $g,f:I\to \C$ be functions. Let $T_n\subseteq 2^{[n]}$ for all $n\geq 1$ such that $(T_n)_{n\geq 1}$ is a nonempty sequence. Assuming $T_n\subseteq \{\{j\}:j\in [n]\}$ for all $n\geq 1$, the following are equivalent:
\begin{itemize}
\item[1.]
$(g,f)_{T_n}[A]\in\Pn_n$ for all $A\in\Pn_n(I)$, for all $n\geq 1$.
\item[2.]
$f(z)=\sum_{m,k\geq 0}c_{m,k}z^{m}\overline{z}^k$ for all $z\in I$, and $g(x)\geq f(x)$ for all $x\in I\cap \R_{\geq 0}$, where $c_{m,k}\geq 0$ for all $m,k\geq 0$.
\end{itemize}
\end{utheorem}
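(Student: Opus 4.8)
The plan is to prove both implications, with the reverse $(2)\Rightarrow(1)$ being routine and the forward $(1)\Rightarrow(2)$ carrying the real content. For $(2)\Rightarrow(1)$: given $f(z)=\sum_{m,k\geq0}c_{m,k}z^m\overline z^k$ with $c_{m,k}\geq0$, Theorem~\ref{HerzPlus} gives $f[A]\in\Pn_n$ for every $A\in\Pn_n(I)$. Since the hypothesis $T_n\subseteq\{\{j\}:j\in[n]\}$ forces the forbidden sets to be singletons, the operation alters only diagonal entries, so $(g,f)_{T_n}[A]=f[A]+D$ where $D$ is diagonal with $D_{jj}=g(a_{jj})-f(a_{jj})$ at forbidden indices $j$ and $0$ elsewhere. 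As $A$ is positive semidefinite, its diagonal entries $a_{jj}$ lie in $I\cap\R_{\geq0}$, so $g(a_{jj})\geq f(a_{jj})$ makes $D\succeq0$ and hence $(g,f)_{T_n}[A]=f[A]+D\succeq0$.

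For $(1)\Rightarrow(2)$ I would first establish $g(x)\geq f(x)$ on $I\cap\R_{\geq0}$. Using that $(T_n)_{n\geq1}$ is nonempty, fix $N\geq2$ with $T_N\neq\emptyset$ and a forbidden index $j_0$ at level $N$. Applying $(g,f)_{T_N}$ to the all-ones matrix $x\,\mathbf{1}_{N\times N}$, which lies in $\Pn_N(I)$ for $x\in[0,\rho)$, produces a positive semidefinite matrix $M$ with $M_{j_0 j_0}=g(x)$ and $M_{j_0 k}=f(x)$ for $k\neq j_0$; testing against $e_{j_0}-e_k$ for any $k\neq j_0$ gives $v^*Mv\in\{g(x)-f(x),\,2(g(x)-f(x))\}$, so positivity yields $g(x)\geq f(x)$ directly (this also shows $g(x),f(x)$ are real there, as diagonal and symmetric off-diagonal entries of a Hermitian matrix).

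The crux is to show $f[B]\succeq0$ for all $B\in\Pn_m(I)$ and all $m$, after which Theorem~\ref{HerzPlus} forces the Herz expansion with $c_{m,k}\geq0$ and completes the proof. The main obstacle is that the theorem permits every diagonal index to be forbidden (e.g.\ $T_n=\{\{j\}:j\in[n]\}$ for all $n$), so in general one cannot exhibit $f[B]$ as a principal submatrix on unforbidden positions, which is the naive route. I would circumvent this by a Kronecker dilution: apply the operation to $\mathbf{1}_{k\times k}\otimes B\in\Pn_{km}(I)$, obtaining $\mathbf{1}_{k\times k}\otimes f[B]+\Delta$, where $\Delta$ is diagonal with entries in $[0,\delta]$ and $\delta:=\max_i\big(g(b_{ii})-f(b_{ii})\big)<\infty$ by the previous paragraph. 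Testing against $w=\mathbf{1}_k\otimes u$ and using that the left-hand side is a real nonnegative scalar gives $0\leq w^*\big((g,f)_{T_{km}}[\mathbf{1}_{k\times k}\otimes B]\big)w=k^2\,u^*f[B]u+w^*\Delta w\leq k^2\,u^*f[B]u+\delta k\|u\|^2$, whence $u^*f[B]u$ is real with $u^*f[B]u\geq-\delta\|u\|^2/k$; letting $k\to\infty$ yields $u^*f[B]u\geq0$ for all $u$, so $f[B]\in\Pn_m$ (the real nonnegative quadratic form forcing Hermiticity as well).

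Thus the device that overcomes the difficulty is that the genuine $f$-signal grows quadratically in $k$ along the vector $\mathbf{1}_k\otimes u$ while the diagonal correction $\Delta$ contributes only linearly, so the correction washes out in the limit. I expect the care to go into verifying the Kronecker identities $w^*(\mathbf{1}_{k\times k}\otimes f[B])w=k^2u^*f[B]u$ and $\|w\|^2=k\|u\|^2$, confirming $\mathbf{1}_{k\times k}\otimes B\in\Pn_{km}(I)$, and the uniform bound $0\preceq\Delta\preceq\delta I$.
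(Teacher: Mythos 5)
Your argument is correct, and it reorganizes the paper's proof in a way worth noting. The reverse implication and the derivation of $g(x)\geq f(x)$ on $I\cap\R_{\geq 0}$ (via $x\mathbf{1}_{N\times N}$ and the vector $e_{j_0}-e_k$) match the paper's Lemma~\ref{T1L2} and the decomposition $(g,f)_{T_n}[A]=f[A]+(g-f,\mathbf{0})_{T_n}[A]$. For the crux, the paper proceeds in two stages: it first splits into cases according to whether $(n-|T_n|)_{n\geq 1}$ is bounded, and uses zero-padding plus conjugation by a permutation to realize either $f[A]$ or $(g,f)_{*}[A]$ as a principal submatrix of $(g,f)_{T_N}[\,\cdot\,]$; only then, in the second case, does it invoke the Kronecker dilution $\mathbf{1}_m\otimes A$ with Weyl's inequality (Lemma~\ref{T1L1}, following Guillot--Rajaratnam). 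You apply the same dilution engine --- signal of order $k^2$ along $\mathbf{1}_k\otimes u$ versus a diagonal correction of order $k$ bounded by $0\preceq\Delta\preceq\delta\Id$ --- but directly to the arbitrary singleton sequence $T_{km}$, which makes the bounded/unbounded dichotomy and the embedding step unnecessary and handles both of the paper's cases uniformly; working with quadratic forms rather than $\lambda_{\min}$/$\lambda_{\max}$ is an equivalent formulation. The one point to make explicit in a write-up is the final passage from ``$u^*f[B]u$ is real and nonnegative for all $u\in\C^m$'' to ``$f[B]$ is Hermitian positive semidefinite,'' which you correctly flag and which holds over $\C$ by the polarization identity.
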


Theorem~\ref{T1} classifies the tuples $(g,f)$, such that $f$ is forbidden to operate on $1\times 1$ diagonal blocks (or diagonal entries) specified in $(T_n)_{n\geq 1}$, and $g$ operates on those forbidden blocks. As a consequence, $f$ now does not have to vanish at the origin, in contrast to Theorem~\ref{Guillot-Rajaratnam}. More precisely, we can now apply any absolutely monotonic function on the (specified) off-diagonal entries and any function on the (specified) diagonal entries by making sure that the latter function is large enough. Thus, the introduction of $g$ releases $f$ from a few restrictions, and makes the corresponding real analogue of Theorem~\ref{T1} (see Theorem~\ref{T1real}) into a twofold refinement of Theorem~\ref{Guillot-Rajaratnam}.

Next, we consider the possibilities when $f$ is forbidden from larger diagonal/principal blocks, i.e. $T_n \not\subseteq \{\{j\}:j\in [n]\}$ for some $n\geq 3$. Here we assume that $g$ is a Herz function.

\begin{utheorem}\label{T2}
Let $I= D(0,\rho)$ for $0<\rho\leq\infty$, and $g,f:I\to \C$ be functions. Let $T_n\subseteq 2^{[n]}$ for all $n\geq 1$ such that $(T_n)_{n\geq 1}$ is a nonempty sequence. Suppose there exists $N \geq 3$ and $U \in T_N$ with $2\leq |U| \leq N-1$. Assuming each $T_n$ is a partition of a subset of $[n]$, and $g(z):=\alpha z^{m}\overline{z}^k$ for $\alpha \geq 0$ and integers $m,k \geq 0$, the following are equivalent:
\begin{itemize}
\item[1.]
$(g,f)_{T_n}[A]\in\Pn_n$ for all $A\in\Pn_n(I)$, for all $n\geq 1$.
\item[2.]
Exactly one of the following holds:
\begin{itemize}
\item[a.] If $T_n$ is a partition of a proper subset of $[n]$ for some $n\geq 1$, then 
\begin{align*}
f(z)=cg(z) \mbox{ for all }z\in I, \mbox{ where }c\in [0,1].
\end{align*}
\item[b.] If $T_n$ is a partition of $[n]$ for all $n\geq 1$, and $K:=\max_{n\geq 1} |T_n| $, then
\begin{align*}
f(z)=cg(z) \mbox{ for all }z\in I,
\end{align*}
where $c\in [-1/(K-1),1]$ if $K\in \Z$, and $c\in [0,1]$ if $K=\infty$.
\end{itemize}
\end{itemize}
\end{utheorem}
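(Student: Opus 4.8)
The plan is to prove both implications by factoring out the rank-one structure of the Herz function $g$, thereby reducing everything to the positivity of a fixed ``pattern matrix'' that records the scalar $c$. I will assume $\alpha>0$ (if $\alpha=0$ then $g\equiv0$, and the vanishing of the diagonal on a block of size $\ge 2$ forces $f\equiv0$ directly). Write $d=m+k$ and $v_i:=u_i^{m}\overline{u_i}^{k}$, so that for a rank-one $A=uu^{*}$ one has the crucial identity $g(u_i\overline{u_j})=\alpha\, u_i^{m}\overline{u_j}^{m}\,\overline{u_i}^{k}u_j^{k}=\alpha\, v_i\overline{v_j}$, i.e. $g[A]=\alpha vv^{*}$ is rank one.

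For sufficiency $(2\Rightarrow 1)$, given $f=cg$ I would observe that $(g,f)_{T_n}[A]=M\circ g[A]$, where $M=M^{(n)}$ has $M_{ij}=1$ whenever $i,j$ lie in a common $U\in T_n$ and $M_{ij}=c$ otherwise, and where $g[A]=\alpha\,A^{\circ m}\circ\overline{A}^{\circ k}\succeq 0$ by Theorem~\ref{Schur} together with the fact that $\overline{A}=A^{T}$ is positive semidefinite. By the Schur product theorem it then suffices that $M\succeq 0$. Expanding $x^{*}Mx=c\,|\sum_i x_i|^{2}+(1-c)\sum_{U}|\sum_{i\in U}x_i|^{2}$ and substituting the block sums $y_U$ (plus a free coordinate in case~(a)), the form $M$ reduces to $cJ_p+(1-c)I_p$ with $p=|T_n|$, whose eigenvalues $1+c(p-1)$ and $1-c$ give positivity exactly for $c\in[-1/(p-1),1]$ in case~(b); the presence of a free index contributes a diagonal entry equal to $c$ and pins the range down to $[0,1]$ in case~(a). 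Intersecting over all $n$ produces the stated ranges.

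For necessity $(1\Rightarrow 2)$ the main point is proportionality. Fix $N\ge 3$ and $U\in T_N$ with (after relabeling) $1,2\in U$; since $T_N\neq\{[N]\}$ by Remark~\ref{Sequencerem}, there is some $3\in[N]\setminus U$. For $A=uu^{*}$ with $u_4=\dots=u_N=0$, the principal submatrix $B'$ of $(g,f)_{T_N}[A]$ on $\{1,2,3\}$ is positive semidefinite, and its top-left $2\times 2$ block equals $\alpha vv^{*}$, hence is rank one and singular. Testing against $x=(\overline{v_2},-\overline{v_1},0)$, which annihilates that block and so gives $x^{*}B'x=0$, positivity upgrades this to $B'x=0$; reading off the third coordinate yields $f(u_1\overline{u_3})\,v_2=f(u_2\overline{u_3})\,v_1$, that is, $h(u_1\overline{u_3})=h(u_2\overline{u_3})$ for $h:=f/g$ (using $g(u_i\overline{u_3})=\alpha v_i\overline{v_3}$). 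Since any two nonzero points $p,q\in I$ can be written as $u_1\overline{u_3},u_2\overline{u_3}$ for a suitable choice of nonzero $u_1,u_2,u_3$ (take $u_3=\epsilon$ small and $u_1=p/\epsilon$, $u_2=q/\epsilon$, which keeps every entry in $D(0,\rho)$), $h$ is constant on $I\setminus\{0\}$, say $h\equiv c$, with $c\in\R$ by evaluating at real points. This is $f=cg$ on $I\setminus\{0\}$, while a $2\times 2$ minor on $\{1,3\}$ with $u_3\to 0$ forces $f(0)=cg(0)$, so $f=cg$ on all of $I$. Finally, feeding $f=cg$ back into $(1)$ with $A$ a positive multiple of the all-ones matrix reduces to $M^{(n)}\succeq 0$ for every $n$, the same spectral computation as above, which returns the ranges in~(a) and~(b).

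The hard part is the proportionality step: everything hinges on recognizing that $g[A]$ restricted to a block is the \emph{singular} rank-one matrix $\alpha vv^{*}$, so that the equality case $x^{*}B'x=0$ of positive semidefiniteness is promoted to the exact identity $B'x=0$. This is precisely what produces the functional equation $h(u_1\overline{u_3})=h(u_2\overline{u_3})$ and hence the constancy of $f/g$; the determination of the admissible range of $c$ is then only the routine eigenvalue analysis of the pattern matrix $M^{(n)}$.
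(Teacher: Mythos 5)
Your strategy is sound and genuinely different from the paper's: for necessity you exploit the fact that $g[uu^*]$ restricted to a forbidden block is the singular rank-one matrix $\alpha vv^*$, and promote $x^*B'x=0$ to $B'x=0$ to obtain the two-point relation $h(u_1\overline{u_3})=h(u_2\overline{u_3})$ for $h:=f/g$; the paper instead (Lemmas~\ref{T2L1} and~\ref{T2L2}) computes Schur complements of $(g,f)_{T_3}[A_w(z)]$ and reads proportionality off a determinant that is forced to be minus a square. For sufficiency, your factorization $(g,f)_{T_n}[A]=M^{(n)}\circ g[A]$ together with the quadratic-form computation of $M^{(n)}$ is essentially the ``alternate simpler proof'' the paper records after Theorem~\ref{GenLonlyDiag}, and is correct.

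There is, however, a concrete error in the realizability step of the necessity argument. You claim any two nonzero $p,q\in I$ arise as $u_1\overline{u_3},u_2\overline{u_3}$ by taking $u_3=\epsilon$ small, $u_1=p/\epsilon$, $u_2=q/\epsilon$, ``which keeps every entry in $D(0,\rho)$''. It does not: the diagonal entry $|u_1|^2=|p|^2/\epsilon^2$ blows up as $\epsilon\to 0$, so for $\rho<\infty$ the matrix $uu^*$ leaves $\Pn_3(I)$ and the relation $h(p)=h(q)$ is not established. This is exactly the point where the paper is careful: its matrix $A_w(z)$ in \eqref{Mat2T2L1} is normalized by $1/|w|$ so that every entry has modulus at most $|w|<\rho$, at the cost of only relating $f(z)$ to $f(|w|)$ for $|z|\le|w|$, after which the constant $f(|w|)/g(|w|)$ must be shown independent of $w$ via \eqref{Mat1T2L1}. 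Your argument is repairable along the same lines: assuming without loss of generality $|q|\le|p|$, choose $|u_1|=|u_3|=\sqrt{|p|}$ with phases making $u_1\overline{u_3}=p$, and set $u_2=q/\overline{u_3}$; then every entry of $uu^*$ has modulus at most $|p|<\rho$, and since the relation $h(p)=h(q)$ is symmetric in $p$ and $q$ it holds for all nonzero $p,q\in I$. You also need a slightly more careful treatment of $f(0)$ than ``$u_3\to 0$'' (the $2\times 2$ minor on $\{1,3\}$ does not pin down $f(0)$ when $m=k=0$ or when the index $3$ is free); taking $u_1=0$ and $u_2,u_3\neq 0$ in the same kernel identity does the job in all cases. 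With these repairs the remaining steps — reality of $c$, the reduction of the admissible range to $M^{(n)}\succeq 0$, and the eigenvalues $1-c$ and $1+(|T_n|-1)c$ — go through and recover the stated ranges.
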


In Theorem~\ref{T2}, we consider sequences $(T_n)_{n\geq 1}$ where each $T_n$ is a partition of some subset of $[n]$, and $f$ is forbidden from at least one $2\times 2$ diagonal/principal block, which means the Herz function $g$ in the tuple $(g,f)$ is acting (entrywise) on that block. As we show, this makes $f$ a scalar multiple of $g$, i.e. $f\equiv c \cdot g$. However, the domain for the scalar $c$ has a stronger dependence on the sequence $(T_n)_{n\geq 1}$, and this provides us with the first examples of dimension-free non-absolutely monotonic preservers (see Corollary~\ref{The-corollary}(2.c)). 

The next result relates the components in $(g,f)$ when $f$ is forbidden from at least two ``overlapping'' $2\times 2$ diagonal/principal blocks, i.e. the case when at least one $T_n$ is a not a partition of any subset of $[n]$ for some $n\geq 3$.

\begin{utheorem}\label{T3}
Let $I= D(0,\rho)$ for $0<\rho\leq\infty$, and $g,f:I\to \C$ be functions. Let $T_n\subseteq 2^{[n]}$ for all $n\geq 1$ such that $(T_n)_{n\geq 1}$ is a nonempty sequence. Assuming that $T_N$ is not a partition of any subset of $[N]$ for some $N\geq 3$, the following are equivalent:
\begin{itemize}
\item[1.]
$(g,f)_{T_n}[A]\in\Pn_n$ for all $A\in\Pn_n(I)$, for all $n\geq 1$.
\item[2.]
$g(z) = f(z)=\sum_{m,k\geq 0} c_{m,k}z^{m}\overline{z}^k$ for all $z\in I$, where $c_{m,k}\geq 0$ for all $m,k\geq 0$.
\end{itemize}
\end{utheorem}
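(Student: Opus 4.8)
The plan is to prove the non-trivial implication $(1)\Rightarrow(2)$ of Theorem~\ref{T3} by reducing everything to a single claim: that assumption (1) forces $g\equiv f$ on $I$. The reverse implication $(2)\Rightarrow(1)$ is immediate, since if $g=f=\sum_{m,k\ge0}c_{m,k}z^{m}\overline{z}^{k}$ then $(g,f)_{T_n}[A]=f[A]$ regardless of $(T_n)$, and this preserves positivity in all dimensions by the easy half of Theorem~\ref{HerzPlus}. Conversely, once I show that (1) forces $g\equiv f$, the same collapse $(g,f)_{T_n}[A]=f[A]$ turns (1) into the statement that $f[A]\in\Pn_n$ for all $A\in\Pn_n(I)$ and all $n$, whence Theorem~\ref{HerzPlus} yields the Herz expansion. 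So the whole content lies in deducing $g\equiv f$ from the overlap hypothesis.

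The overlap is encoded by the fact that $T_N$ is not a partition: being an antichain by Remark~\ref{Sequencerem}, it must contain two \emph{properly overlapping} blocks $U,V$, i.e.\ with $U\cap V$, $U\setminus V$ and $V\setminus U$ all nonempty. The heart of the argument is to produce indices $a,b,c$ for which, in the transformed matrix, the two edges $(a,b)$ and $(a,c)$ and all three diagonal entries fall under $g$, while the edge $(b,c)$ falls under $f$ --- concretely, $\{a,b\}$ and $\{a,c\}$ each lie in a block but $\{b,c\}$ lies in no block. Granting this configuration, I test with a complex rank-one matrix $M=vv^{\ast}\in\Pn_N(I)$, padded by zeros off the $\{a,b,c\}$ principal block (using $0\in I$). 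Choosing $v_a=v_b=u$ and $v_c=w$, the $(a,c)$ and $(b,c)$ entries of the transform become $g(z)$ and $f(z)$ for the \emph{same} point $z=u\overline{w}$, while the diagonal entries are $g(|u|^2),g(|w|^2)\ge0$.

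After first recording that $g(\overline{z})=\overline{g(z)}$ and $f(\overline{z})=\overline{f(z)}$ (forced by Hermitianness of the output on the relevant $2\times2$ blocks, which also shows $g\ge 0$ on $I\cap\R_{\ge0}$), the principal $3\times3$ transform $M'$ is Hermitian, and evaluating its quadratic form on $\xi=(1,-1,s)$ gives, after the $g(|u|^2)$ terms cancel,
\[
\xi^{\ast}M'\xi \;=\; |s|^2\,g(|w|^2)\;+\;2\,\mathrm{Re}\bigl(s\,(g(z)-f(z))\bigr)\;\ge\;0 \qquad \text{for all } s\in\C .
\]
Since $g(|w|^2)\ge 0$, choosing $s$ antiparallel to $g(z)-f(z)$ and small makes the linear term dominate unless $g(z)=f(z)$; hence $g(z)=f(z)$. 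As every $z\in D(0,\rho)$ is realized as $z=u\overline{w}$ with $|u|^2=|w|^2=|z|<\rho$, this gives $g\equiv f$ on all of $I$, completing the reduction.

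The main obstacle I expect is the combinatorial step above: extracting a configuration $(a,b,c)$ in which $g$ acts on two edges through a common vertex while $f$ acts on the third. Taking $a\in U\cap V$, $b\in U\setminus V$, $c\in V\setminus U$ automatically puts $(a,b),(a,c)$ under $g$, but one must rule out (or circumvent) the possibility that $\{b,c\}$ is itself contained in some block of $T_N$ --- the degenerate, ``fully blocked'' patterns --- which is precisely where the hypotheses on the sequence $(T_n)$, and if necessary a passage to higher dimensions, must be used. Once the configuration is secured the rank-one computation is robust and entirely dimension-free, so I anticipate the combinatorial localization, rather than the analytic estimate, to be the crux of the proof.
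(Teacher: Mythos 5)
Your proposal follows the paper's route almost exactly: the paper likewise reduces the whole theorem to showing $g\equiv f$ and then invokes Theorem~\ref{HerzPlus}, and it extracts the very same $3\times 3$ configuration --- two $g$-edges meeting at a vertex of $U\cap V$ and one $f$-edge joining $U\setminus V$ to $V\setminus U$ --- via Lemma~\ref{T3L1} with $T_3=\{\{1,2\},\{2,3\}\}$. The only real difference is the test matrix: the paper uses the rank-two matrix $B_r(z)$ with identical second and third rows and reads off the determinant $-g(r)\,|f(z)-g(z)|^2$ (treating $g(r)=0$ as a separate case), whereas you use a genuinely rank-one $vv^{*}$ and the quadratic form at $\xi=(1,-1,s)$. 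Your variant is correct and, if anything, slightly cleaner, since the small-$|s|$ argument needs no case split on whether $g$ vanishes at the relevant diagonal point.

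On the step you flag as the crux --- ruling out that $\{b,c\}$ sits inside some third block --- you should know that the paper does \emph{not} supply this either: its proof simply writes $T_N=\{\{m,m_1,\dots\},\{m,m_2,\dots\},\dots\}$ and asserts that the induced sub-operator on $\{m_1,m,m_2\}$ is $(g,f)_{T_3}[-]$ for $T_3=\{\{1,2\},\{2,3\}\}$, which tacitly assumes $\{m_1,m_2\}$ lies in no block. Your caution is substantive: the antichain $T_3=\{\{1,2\},\{2,3\},\{1,3\}\}$ is not a partition of any subset of $[3]$ yet covers every pair, so no admissible triple $(a,b,c)$ exists inside $[3]$ and $f$ never appears in the dimension-$3$ operator; taking all other $T_n=\emptyset$, $f=\Id$ and $g=2\Id$ then satisfies (1) without satisfying (2). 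So the combinatorial localization genuinely requires either reading the hypothesis as excluding such ``fully covered'' patterns or an additional argument (e.g.\ passing to a larger $N$, as you suggest); it is not a point you failed to extract from the paper, because the paper passes over it in silence.
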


In particular, Theorem~\ref{T3} shows that it is impossible to change one off-diagonal entry, independently from the rest of the entries, and claim to preserve positivity over the whole class of positive semidefinite matrices. (See Lemma~\ref{T3L1} and the subsequent remarks for finer details.)

The answer to Question~\ref{Question} follows from the results mentioned so far in this section (by substituting $g=\Id$). For completeness we include Theorem~\ref{HerzPlus} in the next corollary, that shows the classifications for all dimension-free cases possible among the sequences $(T_n)_{n\geq 1}$.

\begin{ucor}\label{The-corollary}
Let $I= D(0,\rho)$ for $0<\rho\leq\infty$, and $f:I\to\C$. Let $T_n\subseteq 2^{[n]}\setminus\{[n]\}$ for all $n\geq 1$. The following are equivalent:                                                                                    
\begin{itemize}
\item[1.] $f_{T_n}[A] \in \Pn_n$ for all $A\in\Pn_n(I)$, for all $n\geq 1$.
\item[2.] Exactly one of the following holds:

\begin{itemize}
\item[a.] If $T_n=\emptyset$ for all $n\geq 1$, then 
\begin{align*}
f(z)=\sum_{m,k\geq 0}c_{m,k}z^{m}\overline{z}^k \mbox{ for all } z\in I, \mbox{ where } c_{m,k}\geq 0 \mbox{ for all }m,k\geq 0. 
\end{align*}
\item[b.] If $T_n\neq\emptyset$ for some $n\geq 1$ and $T_n \subseteq \{\{j\}:j\in [n]\}$ for all $n\geq 1$, then 
\begin{align*}
f(z)=\sum_{m,k\geq 0}c_{m,k}z^{m}\overline{z}^k \mbox{ for all } z\in I \mbox{ and } f(x)\leq x \mbox{ for all } x\in I\cap\R_{\geq 0},
\end{align*}
$\mbox{where } c_{m,k}\geq 0$ for all $m,k\geq 0$.
\item[c.] If each $T_n$ is partition of a subset of $[n]$ and there exists $N\geq 3$ and $U\in T_N$ with $|U|\geq 2$, then
\begin{align*}
f(z)=cz  \mbox{ for all } z\in I, \mbox{ where }
\end{align*}
\begin{itemize}
\item[•]
$c\in [-1/(K-1),1]$ -- if $T_n$ is a partition of $[n]$ for all $n\geq 1$, where $K:=\max_{n\geq 1}|T_n|\in\Z$.

\item[•]
$c\in [0,1]$ -- if either $\max_{n\geq 1}|T_n|=\infty$ or $T_n$ is a partition of a proper subset of $[n]$ for some $n\geq 1$. 
\end{itemize}
\item[d.] If $T_n$ is not a partition of any subset of $[n]$ for some $n \geq 3$, then
\begin{align*}
f(z)=z \mbox{ for all } z\in I.
\end{align*}
\end{itemize}
\end{itemize}
\end{ucor}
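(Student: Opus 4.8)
The plan is to derive the Corollary by specializing $g=\Id$ in Theorems~\ref{T1}, \ref{T2}, and \ref{T3}, and by invoking Theorem~\ref{HerzPlus} directly. First I would observe that, in Definition~\ref{Def2}, the choice $g(z)=z$ makes $(g,f)_{T_n}[-]$ coincide with the operation $f_{T_n}[-]$ of Definition~\ref{Def1}; hence statement~1 of the Corollary is literally statement~1 of each of these theorems with $g=\Id$. The only remaining work is therefore combinatorial and cosmetic: to check that the four structural hypotheses (a)--(d) on $(T_n)_{n\geq 1}$ partition all admissible sequences (those with $T_n\subseteq 2^{[n]}\setminus\{[n]\}$), and to read off what statement~2 of each theorem becomes once $g(x)=x$ is substituted.

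For exhaustiveness and mutual exclusivity, I would argue as follows. Under the reductions of Remark~\ref{Sequencerem}, a sequence is either empty---case (a)---or nonempty. The key combinatorial observation is that for $n\leq 2$ the available index sets $2^{[n]}\setminus\{[n]\}$ contain only the empty set and singletons, since $\{1,2\}=[2]$ is excluded; moreover two distinct blocks can overlap only when at least three indices are present. Consequently, any block $U$ with $|U|\geq 2$, and any violation of the partition property, must first occur at some $N\geq 3$. This yields the trichotomy among nonempty sequences: every $T_n$ consists of singletons only---case (b); or some block has size $\geq 2$ while every $T_n$ remains a partition of a subset of $[n]$---case (c); or some $T_N$ with $N\geq 3$ fails to be a partition of any subset---case (d). These four possibilities are visibly disjoint and cover all admissible sequences.

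It then remains to specialize the conclusions. In case (a) the statement is exactly Theorem~\ref{HerzPlus}. In case (b) the hypotheses are those of Theorem~\ref{T1}, whose conclusion $f(z)=\sum_{m,k\geq 0}c_{m,k}z^{m}\overline{z}^{k}$ together with $g(x)\geq f(x)$ on $I\cap\R_{\geq 0}$ becomes $f(x)\leq x$ there, since $g(x)=x$ (and $f(x)=\sum_{m,k}c_{m,k}x^{m+k}$ is real). In case (c) the hypotheses are those of Theorem~\ref{T2}, applicable because $g(z)=z$ is the Herz function $\alpha z^{m}\overline{z}^{k}$ with $\alpha=1$, $m=1$, $k=0$; its conclusion $f=cg$ becomes $f(z)=cz$, with the admissible interval for $c$ exactly the one recorded in (c). In case (d) the hypotheses are those of Theorem~\ref{T3}, whose conclusion $g=f=\sum_{m,k}c_{m,k}z^{m}\overline{z}^{k}$ forces $f(z)=z$, as $z$ is the only Herz series equal to $g(z)=z$.

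The substantive analysis lives entirely in Theorems~\ref{T1}--\ref{T3}, so this Corollary is essentially bookkeeping, and I do not expect a genuine obstacle. The two points needing care are the combinatorial dichotomy---one must not overlook a block of size $\geq 2$ or a non-partition appearing at small $n$, which the observation about $2^{[n]}\setminus\{[n]\}$ rules out---and the verification that $\Id$ is an admissible choice of $g$ in each theorem, which holds trivially since $z$ is a Herz function. Tracking the four cases carefully is the whole of the task.
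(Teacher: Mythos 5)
Your proposal is correct and follows essentially the same route as the paper, which derives the Corollary precisely by substituting $g=\Id$ into Theorems~\ref{T1}, \ref{T2} and \ref{T3} and including Theorem~\ref{HerzPlus} for the empty case; your explicit verification that the four cases (a)--(d) exhaust and partition the admissible sequences (using that $2^{[n]}\setminus\{[n]\}$ contains only $\emptyset$ and singletons for $n\leq 2$) is a detail the paper leaves implicit but is exactly the intended bookkeeping.
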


This concludes the results for the complex disc domains. See Table~\ref{Table2} below that shows the conformity/contrast in the class of entrywise preservers $f$ characterized in Corollary~\ref{The-corollary}. It shows the necessary changes in the class of preservers $f$ as it progresses from operating on all the entries, to being forbidden from at least one $1\times 1$ diagonal block, to being forbidden from at least one $2\times 2$ diagonal/principal block. Since positive semidefinite matrices are closed under conjugation with permutation matrices, working with principal blocks is the same as working with diagonal blocks and vice versa.
\begin{table}[htbp]
\centering
\begin{tabular}{||c|c|c||}
\hline
\hline
&&\\
$f$ acts on  & $f$ is forbidden from  & $f$ is forbidden from  \\
all the entries & some  & some \\
& $1\times 1$ principal block & $2\times 2$ principal block\\
$\Updownarrow$ & $\Updownarrow$ & $\Updownarrow$ \\

$f$ is absolutely monotonic & $f$ is absolutely monotonic & $f$ is linear,\\
& which is  & vanishing at the origin,\\
& pointwise bounded-above & and sometimes\\
& by the & with \\
& identity function & negative coefficient\\
&&\\
\hline
\hline
\end{tabular}
\caption{Observe the contrast in the class of the positivity preservers when the size of the forbidden principal block is increased from $1\times 1$ to $2\times 2$. See Table~\ref{Table1} for the detailed version of this.} 
\label{Table2}
\end{table}

\subsection{Analogous proofs for real domains}\label{Introrealdomain}
Table~\ref{Table1} and results in the subsequent sections indicate that analogous proofs for $(g,f)$ hold over real domains $I=(-\rho,\rho)$, $[0,\rho)$ and $(0,\rho)$ for $0<\rho\leq\infty$. Corollary~\ref{The-corollary} thus follows immediately for these domains.

\begin{landscape}
\begin{table}[htbp]
\centering
\begin{tabular}{|c|c|c|c|c|c|}
\hline
 && $(g,f)$ & $(g,f)$ & $f$ & $f$ \\
\hline
 && $I= D(0,\rho),$ & $I_{\rho},$ & $I= D(0,\rho),$ & $I_{\rho},$ \\
 & $(T_n)_{n\geq 1}$ & where $0<\rho\leq \infty$ & where $0<\rho\leq \infty$ & where $0<\rho\leq \infty$ & where $0<\rho\leq \infty$ \\
\hline\hline
&&&&&\\
1. & $T_{n}=\emptyset$ & $f(z)=\sum_{m,k\geq 0}c_{m,k}z^{m}\overline{z}^k$ & $f(x)=\sum_{k\geq 0}c_{k}x^{k}$ & $f(z)=\sum_{m,k\geq 0}c_{m,k}z^{m}\overline{z}^k$ & $f(x)=\sum_{k\geq 0}c_{k}x^{k}$\\
&  for all $n\geq 1$ & where all $c_{m,k}\geq 0$ & where all $c_{k}\geq 0$ & where all $c_{m,k}\geq 0$ & where all $c_{k}\geq 0$ \\
\hline
&&&&&\\
2. & $T_{n}\subseteq\{\{j\}:j\in [n]\}$ & $f(z)=\sum_{m,k\geq 0}c_{m,k}z^{m}\overline{z}^k$ & $f(x)=\sum_{k\geq 0}c_{k}x^{k}$ & $f(z)=\sum_{m,k\geq 0}c_{m,k}z^{m}\overline{z}^k$ & $f(x)=\sum_{k\geq 0}c_{k}x^{k}$ \\
 & for all $n\geq 1$, and &  where all $c_{m,k}\geq 0$, &  where all $c_{k}\geq 0$, &  where all $c_{m,k}\geq 0$, &  where all $c_{k}\geq 0$, \\
 & $T_n\neq \emptyset$ for some $n\geq 2$ & $g(x)\geq f(x)$ over $I_{\geq 0}$  & $g(x)\geq f(x)$ over ${I_{\rho}}_{\geq 0}$  & $x\geq f(x)$ over $I_{\geq 0}$ & $x\geq f(x)$ over ${I_{\rho}}_{\geq 0}$ \\
\hline
&&&&&\\
3. &  $T_n=$ subpartition$([n])$& for $g(z)=\alpha z^{m}\overline{z}^k$ & for $g(z)=\alpha x^k$ & $f(z)=cz,$ where & $f(x)=cx,$ where \\
 & for all $n\geq 3$, and & where $\alpha\geq 0, m,k\in \Z_{\geq 0}:$ &  where $\alpha\geq 0, k\in \Z_{\geq 0}:$ && \\
 & $T_{n}\not\subseteq\{\{j\}:j\in [n]\}$ & &  &&\\
 & for some $n\geq 3$ & $f(z)=cg(z)$, where & $f(x)=cg(x)$, where &&\\
&&&&&\\
3.a & $\sqcup_{J\in T_n} J=[n]$& $c\in[-1/(K-1),1]$ & $c\in[-1/(K-1),1]$ & $c\in[-1/(K-1),1]$ & $c\in[-1/(K-1),1]$\\
    &  for all $n\geq 1$, and & &  & & \\ 
&$K:=\max_{n\geq 1}|T_n|\in\Z$&&&&\\
&&&&&\\
3.b & remaining sub-cases & $c\in [0,1]$& $c\in [0,1]$ & $c\in [0,1]$& $c\in [0,1]$\\
\hline
&&&&&\\
4. &  $T_n\neq$ subpartition$([n])$ &  $f(z)=g(z)=$ &  $f(x)=g(x)=$ & $f(z)=z$  &$f(x)=x$\\
 &  for some $n\geq 3 $ & $\sum_{m,k\geq 0}c_{m,k}z^{m}\overline{z}^k$, & $\sum_{k\geq 0}c_{k}x^{k}$,&(over any $I\subseteq\C$)& (over any $I\subseteq\R$)\\
& & where $c_{m,k}\geq 0$& where $c_{k}\geq 0$ &&\\
\hline
\end{tabular}
\caption{$(T_n)_{n\geq 1}$ against $(g,f)$ and $f$ for domains $I=D(0,\rho)$ and $I_{\rho}$, where $I_{\rho}$ is any of the real domains $(-\rho,\rho),(0,\rho)$ and $[0,\rho);$ define $I_{\geq 0}:=I\cap [0,\infty)$ and ${I_{\rho}}_{\geq 0}:={I_{\rho}}\cap [0,\infty).$ `subpartition$([n])$' refers to a partition of a subset of $[n]$. Here we study the dimension-free case, i.e. assuming each $T_n\neq\{[n]\}$; and for each $n$, the subsets in $T_n$ are pairwise incomparable.}
\label{Table1}
\end{table}
\end{landscape}

\section{Proof of Theorem~\ref{T1}}\label{ProofofT1}
In this section, we characterize the preserver tuples $(g,f)$ in full generality, given that each $T_n\subseteq \{\{j\}:j\in [n]\}$ in the given nonempty sequence $(T_n)_{n\geq 1}$, for complex disc domains. We need a few preliminary results. Our first lemma shows the relation between the components in $(g,f)$ when $(g,f)_{*}[-]$ preserves positive semidefiniteness (i.e. $T_n=\{\{j\}:j\in [n]\}$ for all $n\geq 1$), for any $I\subseteq \C$.

\begin{lemma}\label{T1L1}
Let $I\subseteq\C$ such that $I\cap\R_{\geq 0}\neq\emptyset$, and $g,f:I\to \C$. Then the following are equivalent:
\begin{itemize}
\item[1.] $(g,f)_{*}[A]\in\Pn_n$ for all $A\in\Pn_n(I)$, for all $n\geq 1$.
\item[2.] $f[A]\in\Pn_n$ for all $A\in\Pn_n(I)$ for all $n\geq 1$, and $g(x)\geq f(x)$ for all $x\in I\cap \R_{\geq 0}$.
\end{itemize}
\end{lemma}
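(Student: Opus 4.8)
\emph{The easy direction first.} For $(2)\Rightarrow(1)$, fix $A=(a_{ij})\in\Pn_n(I)$ and write $(g,f)_*[A]=f[A]+D$, where $D=\diag\bigl(g(a_{ii})-f(a_{ii})\bigr)$. The diagonal entries $a_{ii}$ of a Hermitian positive semidefinite matrix lie in $I\cap\R_{\geq 0}$, so from $f[A]\succeq 0$ we get that each $f(a_{ii})$ is real and nonnegative, and the hypothesis $g\geq f$ on $I\cap\R_{\geq 0}$ gives $g(a_{ii})-f(a_{ii})\geq 0$. Hence $D$ is a real nonnegative diagonal matrix, and $(g,f)_*[A]=f[A]+D$ is a sum of two positive semidefinite matrices.

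\emph{The converse: the pointwise inequality.} For $(1)\Rightarrow(2)$ I would argue in two steps. First, for $x\in I\cap\R_{\geq 0}$ apply $(g,f)_*$ to the rank-one matrix $x\mathbf{1}\in\Pn_2(I)$ (the $2\times 2$ all-$x$ matrix), obtaining $\left(\begin{smallmatrix} g(x)&f(x)\\ f(x)&g(x)\end{smallmatrix}\right)$. Positive semidefiniteness forces this matrix to be Hermitian, so $f(x)\in\R$, and the $2\times 2$ leading-minor conditions give $g(x)\geq |f(x)|\geq f(x)$. This establishes $g\geq f$ on $I\cap\R_{\geq 0}$, and as a by-product that $f$ is real there.

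\emph{The converse: $f[A]\succeq 0$.} This is where the real work lies. The naive attempt to realize $f[A]$ as an off-diagonal block of a larger positive semidefinite matrix only ever yields two-sided bounds $|w^*f[A]w|\leq w^*(g,f)_*[A]w$, never positivity, because any surviving diagonal entry carries $g$, not $f$. The fix is to \emph{amplify} the $f[A]$-contribution. Given $A\in\Pn_n(I)$ and an integer $N\geq 1$, I would form the $Nn\times Nn$ matrix $B_N:=\mathbf{1}\otimes A$, the Kronecker product of the $N\times N$ all-ones matrix with $A$; it is positive semidefinite with all entries still in $I$. A direct block computation gives the decomposition $(g,f)_*[B_N]=\mathbf{1}\otimes f[A]+\Id_N\otimes D$, where $D=\diag(g(a_{ii})-f(a_{ii}))$ is real by the previous step. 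Testing the vector $v:=\mathbf{e}\otimes w$ (with $\mathbf{e}$ the all-ones vector in $\C^N$) against $(g,f)_*[B_N]\succeq 0$, and using $\mathbf{e}^*\,\mathbf{1}\,\mathbf{e}=N^2$ and $\mathbf{e}^*\mathbf{e}=N$, yields $N^2\,(w^*f[A]w)+N\,(w^*Dw)\geq 0$ for every $w\in\C^n$. Dividing by $N^2$ and letting $N\to\infty$ gives $w^*f[A]w\geq 0$ for all $w$, whence $f[A]$ is Hermitian and positive semidefinite (recall that $w^*Mw\geq 0$ for all complex $w$ already forces $M=M^*$).

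\emph{Main obstacle.} The crux is exactly this quadratic amplification: one must see that iterating $A$ into $\mathbf{1}\otimes A$ makes the off-diagonal copies of $f[A]$ grow like $N^2$ along the all-ones direction while the diagonal correction $\Id_N\otimes D$ grows only like $N$, so the obstructing $g$-term is washed out in the limit. Everything else — the block decomposition, the $2\times 2$ computation, and the $f[A]+D$ splitting of the easy direction — is routine once this construction is identified.
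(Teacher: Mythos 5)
Your proof is correct and follows essentially the same route as the paper: the same amplification $\mathbf{1}_N\otimes A$ with the decomposition $(g,f)_*[\mathbf{1}_N\otimes A]=\mathbf{1}_N\otimes f[A]+\Id_N\otimes D$, and the same $2\times 2$ matrix $x\mathbf{1}_2$ for the pointwise inequality. The only cosmetic difference is that you extract $f[A]\succeq 0$ by testing against $\mathbf{e}\otimes w$ and letting $N\to\infty$, whereas the paper applies Weyl's inequality to $\lambda_{\min}$ and divides by $m$ --- two equivalent ways of exploiting the $N^2$ versus $N$ growth.
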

\begin{proof}
($1$)$\implies$($2$): We adopt an argument in \cite{Guillot-Rajaratnam-Trans}. Let $n,m\geq 1$ be integers, and $A\in\Pn_n(I)$. Then,
\begin{align*}
(g,f)_{*}[\mathbf{1}_{m}\otimes A]&=f[\mathbf{1}_{m}\otimes A]+(g-f,\mathbf{0})_{*}[\mathbf{1}_{m} \otimes A] \\
&=\mathbf{1}_{m}\otimes f[A]+\Id_{m} \otimes (g-f,\mathbf{0})_{*}[A].
\end{align*}
Using Weyl's inequality for Hermitian matrices,
\begin{align*}
0\leq \lambda_{\min}((g,f)_{*}[\mathbf{1}_{m}\otimes A])&= \lambda_{\min}(\mathbf{1}_{m}\otimes f[A]+\Id_{m} \otimes (g-f,\mathbf{0})_{*}[A]) \\
&\leq \lambda_{\min}(\mathbf{1}_{m}\otimes f[A]) + \lambda_{\max}(\Id_{m} \otimes (g-f,\mathbf{0})_{*}[A]) \\
&\leq  m \lambda_{\min}(f[A])+\lambda_{\max}((g-f,\mathbf{0})_{*}[A]).
\end{align*} 
This gives us,
\begin{align*}
\lambda_{\min}(f[A])\geq -\frac{1}{m}\lambda_{\max}((g-f,\mathbf{0})_{*}[A]).
\end{align*}
Since $m$ can be arbitrarily large, 
\begin{align*}
\lambda_{\min}(f[A])\geq 0.
\end{align*}
This implies $f[A]\in\Pn_n$ for all $A\in\Pn_n(I)$ for all $n\geq 1$. Additionally, $g(x)\geq f(x)$ for all $x \in I\cap\R_{\geq 0}$ since $(g,f)_{*}[x\mathbf{1}_{2}]$ is positive semidefinite.\medskip

\noindent($2$)$\implies$($1$): Let $n \geq 1$ be an integer, and $A\in\Pn_n(I)$. Note that
\begin{align*}
(g,f)_{*}[A]=f[A]+(g-f,\mathbf{0})_{*}[A].
\end{align*}
Since, $g(x)\geq f(x)$ for all $x\in I\cap \R_{\geq 0}$, $(g-f,\mathbf{0})_{*}[A]\in\Pn_n$ and thus $(g,f)_{*}[A]\in\Pn_n$.
\end{proof}

Lemma~\ref{T1L1} shows that, over any given domain $I\subseteq \C$, $f$ must be the ``conventional'' entrywise positivity preserver in all dimensions for $(g,f)_{*}[-]$ to preserve positivity in all dimensions. However, the pointwise dominance of $g$ over $f$ follows just by assuming that $g$ acts on at least one diagonal entry in the matrices, i.e. $(T_n)_{n\geq 1}$ is nonempty:

\begin{lemma}\label{T1L2}
Let $I\subseteq\C$ such that $I\cap\R_{\geq 0}\neq\emptyset$, and $g,f:I\to \C$. Suppose $T_2=\{\{1\}\}$ or $T_2=\{\{1\},\{2\}\}$ and $(g,f)_{T_2}[A]\in \Pn_2$ for all $A \in \Pn_2(I)$. If $f(x)\geq 0$ for all $x\in I\cap \R_{\geq 0}$, then $g(x)\geq f(x)$ for all $x\in I\cap\R_{\geq 0}$.
\end{lemma}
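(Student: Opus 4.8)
The plan is to test the hypothesis on the single rank-one matrix $x\mathbf{1}_2$ for each $x\in I\cap\R_{\geq 0}$ and read off the resulting $2\times 2$ positivity constraint. Fix such an $x$; since $x\in I$, the matrix $A:=x\mathbf{1}_2\in\Pn_2(I)$ (it is positive semidefinite, with eigenvalues $0$ and $2x\geq 0$). By hypothesis $M:=(g,f)_{T_2}[A]\in\Pn_2$, so $M$ is Hermitian and positive semidefinite. The precise shape of $M$ depends on $T_2$: when $T_2=\{\{1\}\}$ only the $(1,1)$ entry is computed with $g$, giving $M=\begin{pmatrix} g(x) & f(x) \\ f(x) & f(x)\end{pmatrix}$, whereas when $T_2=\{\{1\},\{2\}\}$ both diagonal entries use $g$, giving $M=\begin{pmatrix} g(x) & f(x) \\ f(x) & g(x)\end{pmatrix}$. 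In either case, Hermiticity of $M$ forces $g(x),f(x)\in\R$, and positive semidefiniteness of a $2\times 2$ Hermitian matrix is equivalent to nonnegativity of its two diagonal entries together with nonnegativity of its determinant.

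I would then carry out the two cases. For $T_2=\{\{1\}\}$, the determinant condition reads $f(x)\bigl(g(x)-f(x)\bigr)\geq 0$, while $f(x)\geq 0$ (it is a diagonal entry of $M$, and is also nonnegative by hypothesis). If $f(x)>0$ I divide to obtain $g(x)\geq f(x)$; if $f(x)=0$ then $g(x)\geq 0=f(x)$ follows from nonnegativity of the $(1,1)$ diagonal entry $g(x)$. For $T_2=\{\{1\},\{2\}\}$, the determinant condition is $g(x)^2-f(x)^2\geq 0$, and since $g(x)\geq 0$ (a diagonal entry) and $f(x)\geq 0$ (by hypothesis), this yields $g(x)\geq f(x)$. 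As $x\in I\cap\R_{\geq 0}$ was arbitrary, the claimed pointwise dominance $g(x)\geq f(x)$ follows.

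The argument is essentially a one-matrix computation, so there is no serious obstacle; the only points requiring a little care are that $f(x)$ is genuinely real (which is automatic from the Hermitian structure of $M$, so that the inequality $g(x)\geq f(x)$ even makes sense) and the degenerate sub-case $f(x)=0$ in the first case, where division is illegitimate and one instead invokes nonnegativity of the diagonal entry $g(x)$. The nonnegativity hypothesis on $f$ over $I\cap\R_{\geq 0}$ is precisely what lets me upgrade the determinant bound (which a priori only controls $|f(x)|$) to the desired inequality $g(x)\geq f(x)$, in contrast to Lemma~\ref{T1L1}, where the full dimension-free hypothesis was available.
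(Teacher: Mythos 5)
Your proof is correct and follows essentially the same route as the paper: both arguments test positivity on the single matrix $(g,f)_{T_2}[x\mathbf{1}_2]$, use nonnegativity of the diagonal entry $g(x)$ to handle the case $f(x)=0$, and use the determinant condition to conclude $g(x)\geq f(x)$ when $f(x)>0$. You merely spell out the two shapes of $T_2$ more explicitly than the paper does; no further changes are needed.
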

\begin{proof}
Since $g$ is acting on a diagonal entry, it is nonnegative over $I\cap\R_{\geq 0}$. If $f(x)=0$ for $x\in I\cap\R_{\geq 0}$ then $g(x)\geq f(x)$, and if $f(x)\neq 0$ then by the positive semidefiniteness of $(g,f)_{T_2}[x\mathbf{1}_2]$, we have $g(x)\geq f(x)$.
\end{proof}

We now prove our first main result:

\begin{proof}[Proof of Theorem \ref{T1}]
We show that, if $(g,f)_{T_n}[A]\in\Pn_n$ for all $A\in\Pn_n(I),$ for all $n\geq 1,$ then -- either $f[A]\in\Pn_n$ for all $A\in\Pn_n(I)$ for all $n\geq 1$ or $(g,f)_{*}[A]\in\Pn_n$ for all $A\in\Pn_n(I)$ for all $n\geq 1$.

Note that $n-|T_n|$ is the number of diagonal entries of $(g,f)_{T_n}[A]$ which are equal to $f(a_{ii})$ for $i\in [n]$. We consider two cases of the sequence $(T_n)_{n\geq 1}$:

\begin{itemize}
\item[($\romannumeral 1\relax$)] 
$(n-|T_n|)_{n\geq 1}$ is unbounded: Without loss of generality, given a positive integer $n$ there exists a positive integer $N$ such that $N-|T_{N}| \geq n$. Let $A\in\Pn_n(I)$ and define the $N\times N$ matrix $A':=\begin{pmatrix}
A & \mathbf{0}\\
\mathbf{0} & \mathbf{0}
\end{pmatrix}$ by padding with zero matrices of sizes obvious from the context. Now we conjugate $A'$ with a suitable permutation matrix $P$ such that $f[A]$ is a principal submatrix of $(g,f)_{T_N}[P A' P^T]$, which must be positive semidefinite.

\item[($\romannumeral 2\relax$)] $(n-|T_n|)_{n\geq 1}$ is bounded: As in the previous case, we can embed $A$ and construct larger matrix $A^{\prime}$ such that the positive semidefinite principal submatrix is $(g,f)_{*}[A]$ for all $A\in \Pn_n(I)$ for all $n\geq 1$.
\end{itemize}

Now we use Lemma~\ref{T1L1} to conclude that $f[A]\in\Pn_n$ for all $A\in\Pn_n(I)$ for all $n\geq 1$ and then use Theorem~\ref{HerzPlus} to arrive at 
\begin{align*}
f(z)=\sum_{m,k\geq 0} c_{m,k} z^m \overline{z}^k  \mbox{ for all }z\in I, \mbox{ where } c_{m,k}\geq 0, \mbox{ for all }m,k\geq 0.
\end{align*}
Since $T_n$ is nonempty for some $n\geq 2$, and $f(x)\geq 0$ over $I\cap\R_{\geq 0}$, Lemma~\ref{T1L2} implies that
$
g(x)\geq f(x) \mbox{ for all }x\in I\cap\R_{\geq 0}.
$ Conversely, for $n\geq 1$ and $A\in\Pn_n(I),$
\begin{align*}
(g,f)_{T_n}[A]=f[A]+(g-f,\mathbf{0})_{T_n}[A].
\end{align*}
Using Theorem~\ref{HerzPlus} and that $g(x)\geq f(x)$ over $I\cap\R_{\geq 0}$, we conclude that $(g,f)_{T_n}[A] \in\Pn_n \mbox{ for all }A \in \Pn_n(I), \mbox{ for all } n\geq 1.$
\end{proof}

We relate Corollary~\ref{The-corollary}(2.a) and (2.b), i.e. $g=\Id$ in Theorem~\ref{T1}. We now have a proof that the preservers in (2.b) are the preservers in (2.a), which are absolutely monotonic for real domains (see Theorem~\ref{T1real}), that are moreover pointwise bounded-above by the function $g\equiv \Id$ over nonnegative reals in the domain. One of the notable consequences of this is that the preservers are necessarily linear (vanishing at the origin, with nonnegative slope) only if the domain $I$ includes all the positive real numbers, i.e. $I=\C$ for the complex disc domain case. This inclusion of all positive real numbers into the domain $I$ is not required for the linearity of the preservers in the coming sections, where $f$ is forbidden from at least one $2 \times 2$ diagonal/principal block, unlike the cases resolved in this section.

\subsection{Analogous proofs for real domains}\label{ProofofT1Subsec}
The proof of Theorem~\ref{T1} uses the fact that, $\Pn_n(I)$ can be embedded into $\Pn_N(I)$ for $n<N$ by padding the matrices in $\Pn_n(I)$ with zeros, for domains $I$ containing the origin. Thus the proofs for complex disc domains also go through verbatim for $I=(-\rho,\rho)$ or $[0,\rho)$. However, for $I=(0,\rho)$ padding with zeros is not possible, and thus we mention the next lemma for a workaround.

\begin{lemma}[Albert \cite{Albert}]
\label{Albert}
Let $I=(0,\rho)$ for $0<\rho\leq \infty$, and $n\geq 1$ be an integer. If $A=(a_{ij})\in\Pn_n(I)$ then for small $\epsilon > 0$, 
\begin{align*}
\begin{pmatrix}
A & \epsilon A\mathbf{1}_{n}\\
\epsilon (A\mathbf{1}_{n})^T & \epsilon \sum_{ij} a_{ij}
\end{pmatrix}\in\Pn_{n+1}(I).
\end{align*}
\end{lemma}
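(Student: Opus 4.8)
The plan is to verify separately the two requirements packed into the conclusion $M\in\Pn_{n+1}(I)$: first that every entry of the bordered matrix lies in $(0,\rho)$, and second that $M$ is positive semidefinite. I would start by naming the relevant quantities. Writing $\mathbf{1}_n$ for the all-ones column vector and $v:=A\mathbf{1}_n$ for the vector of row sums, observe that the corner scalar is exactly $\mathbf{1}_n^T A\mathbf{1}_n=\sum_{ij}a_{ij}$, so that $M=\begin{pmatrix} A & \epsilon v\\ \epsilon v^T & \epsilon\,\mathbf{1}_n^T A\mathbf{1}_n\end{pmatrix}$. The key structural feature to exploit is that the border is precisely $A$ applied to $\mathbf{1}_n$, and the corner is the associated quadratic form $\mathbf{1}_n^T A\mathbf{1}_n$; this is what makes everything line up.

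For the entrywise requirement, since $A\in\Pn_n(I)$ has all entries in $(0,\rho)$, each row sum $v_i=\sum_j a_{ij}$ and the corner $\sum_{ij}a_{ij}$ are strictly positive; multiplying by $\epsilon>0$ keeps them positive, and (when $\rho<\infty$) choosing $\epsilon$ small enough keeps $\epsilon v_i$ and $\epsilon\sum_{ij}a_{ij}$ below $\rho$. Thus all entries lie in $(0,\rho)$ once $\epsilon$ is sufficiently small.

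For positive semidefiniteness, since $M$ is real symmetric it suffices to test against real vectors $(x,t)$ with $x\in\R^n$ and $t\in\R$. Computing the quadratic form and using $v=A\mathbf{1}_n$ gives $x^T A x+2\epsilon t\,x^T A\mathbf{1}_n+\epsilon t^2\,\mathbf{1}_n^T A\mathbf{1}_n$. Introducing the positive semidefinite square root and setting $u:=A^{1/2}x$ and $w:=A^{1/2}\mathbf{1}_n$, this becomes $\|u\|^2+2\epsilon t\,\langle u,w\rangle+\epsilon t^2\|w\|^2$, which I would rewrite by completing the square as $\|u+\epsilon t w\|^2+\epsilon(1-\epsilon)\,t^2\|w\|^2$. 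Both summands are nonnegative whenever $0<\epsilon\le 1$, so $M\succeq 0$. Combining the two steps, any $\epsilon$ with $0<\epsilon\le 1$ that is also small enough to keep the new entries inside $(0,\rho)$ works, which is exactly the assertion ``for small $\epsilon>0$''.

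The only subtlety — the point where a naive approach stumbles — is that $A$ need not be invertible, so the ordinary Schur-complement criterion cannot be applied verbatim. The completing-the-square argument sidesteps this entirely, since it uses only $A^{1/2}$ and never $A^{-1}$. Equivalently, one can note that $v=A\mathbf{1}_n$ automatically lies in the range of $A$ and that $v^T A^{+}v=\mathbf{1}_n^T A\mathbf{1}_n$ (using $AA^{+}A=A$), so even the generalized Schur complement $\epsilon\,\mathbf{1}_n^T A\mathbf{1}_n-\epsilon^2 v^T A^{+}v=\epsilon(1-\epsilon)\,\mathbf{1}_n^T A\mathbf{1}_n$ is nonnegative for $\epsilon\le 1$. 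Either route confirms that it is the factor $\epsilon(1-\epsilon)$, rather than any spectral information about $A$, that forces the threshold $\epsilon\le 1$.
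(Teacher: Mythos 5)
Your proof is correct and complete. The paper itself gives no argument for this lemma --- it is quoted from Albert's work as a known fact --- so there is nothing to compare against; your completing-the-square computation $\|u+\epsilon t w\|^2+\epsilon(1-\epsilon)t^2\|w\|^2$ with $u=A^{1/2}x$, $w=A^{1/2}\mathbf{1}_n$ is a clean, self-contained verification, and you correctly handle both the entrywise constraint (all new entries positive, and below $\rho$ for small $\epsilon$) and the possible singularity of $A$ via the square root (or, equivalently, the generalized Schur complement with $A^{+}$).
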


To complete the argument, we invoke either Theorem~\ref{SchoenbergPlus} or Theorem~\ref{VasudevaPlus}. The result can be stated formally as:

\begin{theorem}\label{T1real}
Let $I=(-\rho,\rho)$, $[0,\rho)$ or $(0,\rho)$ for $0<\rho\leq \infty$, and $g,f:I\to \R$ be functions. Let $T_n\subseteq 2^{[n]}$ for all $n\geq 1$ such that $(T_n)_{n\geq 1}$ is a nonempty sequence. Assuming $T_n\subseteq \{\{j\}:j\in [n]\}$ for all $n\geq 1$, the following are equivalent:
\begin{itemize}
\item[1.]
$(g,f)_{T_n}[A]\in\Pn_n$ for all $A\in\Pn_n(I)$, for all $n\geq 1$.
\item[2.]
$f(x)=\sum_{k\geq 0}c_{k}x^{k}$ for all $x\in I$, and $g(x)\geq f(x)$ for all $x\in I\cap \R_{\geq 0}$, where $c_{k}\geq 0$ for all $k\geq 0$.
\end{itemize}
\end{theorem}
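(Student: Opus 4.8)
The plan is to run the proof of Theorem~\ref{T1} essentially verbatim, replacing the complex input Theorem~\ref{HerzPlus} by its real counterparts and patching the one step that genuinely uses $0\in I$. As in that proof, the forward implication splits according to whether the sequence $(n-|T_n|)_{n\geq 1}$ --- the number of diagonal entries on which $f$ acts --- is unbounded or bounded. In the unbounded case, given $A\in\Pn_n(I)$ I would choose $N$ with $N-|T_N|\geq n$, embed $A$ into an $N\times N$ matrix and conjugate by a permutation placing the $n$ indices of $A$ outside every $U\in T_N$, so that $f[A]$ is a principal submatrix of $(g,f)_{T_N}[\,\cdot\,]$ and hence positive semidefinite. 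In the bounded case the same device instead exhibits $(g,f)_{*}[A]$ as a principal submatrix. Either way, Lemma~\ref{T1L1} yields $f[A]\in\Pn_n$ for all $A\in\Pn_n(I)$ and all $n$, and then Lemma~\ref{T1L2} (applicable because a positivity preserver is nonnegative on $I\cap\R_{\geq 0}$) gives $g\geq f$ on $I\cap\R_{\geq 0}$.

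The only domain-sensitive step is the embedding. For $I=(-\rho,\rho)$ or $I=[0,\rho)$ we have $0\in I$, so padding a matrix in $\Pn_n(I)$ with zero blocks keeps it in $\Pn_N(I)$; the argument above then goes through unchanged, and the conclusion ``$f$ is a power series with nonnegative coefficients'' now comes from Theorem~\ref{SchoenbergPlus} (for $(-\rho,\rho)$) or Theorem~\ref{VasudevaPlus} (for $[0,\rho)$) in place of Theorem~\ref{HerzPlus}. For $I=(0,\rho)$ the origin is excluded and zero-padding is no longer legal, which is the one real obstacle; here I would substitute Albert's embedding, Lemma~\ref{Albert}, which enlarges any $A\in\Pn_n(I)$ to a matrix in $\Pn_{n+1}(I)$ with all entries in $I$. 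Iterating this (and conjugating by permutations) produces arbitrarily large matrices in $\Pn_N(I)$ containing $A$ as a principal block positioned away from the forbidden index sets, so that the same ``principal submatrix'' argument realizes $f[A]$ or $(g,f)_{*}[A]$, after which Theorem~\ref{VasudevaPlus} finishes the identification of $f$.

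For the converse, I would write, for each $n$ and $A\in\Pn_n(I)$,
\begin{align*}
(g,f)_{T_n}[A]=f[A]+(g-f,\mathbf{0})_{T_n}[A].
\end{align*}
The first summand is positive semidefinite by Lemma~\ref{Polya-Szego}, and since each $T_n\subseteq\{\{j\}:j\in[n]\}$, the second summand is a diagonal matrix whose nonzero entries are the values $g(a_{ii})-f(a_{ii})\geq 0$ on the forbidden indices, hence positive semidefinite as well; adding gives the claim. I expect the bounded/unbounded case split to be routine once the embedding is in hand, so the single genuine difficulty is replacing zero-padding on $I=(0,\rho)$ --- cleanly resolved by Lemma~\ref{Albert}, at the cost of checking that Albert's one-step enlargement can be iterated and permuted to place $A$ on non-forbidden indices.
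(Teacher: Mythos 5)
Your proposal matches the paper's own treatment: the paper likewise runs the proof of Theorem~\ref{T1} verbatim for $I=(-\rho,\rho)$ and $[0,\rho)$ via zero-padding, substitutes Lemma~\ref{Albert} as the embedding workaround for $I=(0,\rho)$, and closes with Theorem~\ref{SchoenbergPlus} or Theorem~\ref{VasudevaPlus} in place of Theorem~\ref{HerzPlus}. Your added care about iterating and permuting Albert's one-step enlargement is a correct fleshing-out of a detail the paper leaves implicit.
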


\section{Proof of Theorem \ref{T2}}\label{ProofofT2}
This section classifies the entrywise positivity preservers $(g,f)$ for nonempty sequences $(T_n)_{n\geq 1}$ in which, unlike Section~\ref{ProofofT1}, $T_n \not \subseteq \{\{j\}:j\in [n]\}$ for some $n\geq 3$. However, we assume that each $T_n$ is a partition of some subset of $[n]$ for all $n\geq 1$, and $g$ is a Herz function. 

As the class of positive semidefinite matrices is closed under conjugation by permutation matrices and under taking principal submatrices, we gather necessary conditions on $f$ in the next two lemmas, by working with $(g,f)_{T_3}[-]$ for $T_3=\{\{1,2\}\}$ first, and then for $T_3=\{\{1,2\},\{3\}\}$.

\begin{lemma}\label{T2L1}
Let $I= D(0,\rho)$ for $0<\rho\leq\infty$, and $g,f:I\to\C$. Suppose $T_3=\{\{1,2\}\}$. Assuming $g(z):=\alpha z^{m}\overline{z}^k$ for $\alpha \geq 0$ and $m,k\in \Z_{\geq 0}$, the following are equivalent:
\begin{itemize}
\item[1.] $(g,f)_{T_3}[A]\in \Pn_3$ for all $A\in\Pn_3(I)$.
\item[2.] $(g,f)_{T_3}[A]\in \Pn_3$ for all rank-one $A \in \Pn_3(I)$.
\item[3.] $f(z)=cg(z)$ for all $z\in I$, where $ c \in [0,1]$.
\end{itemize}
\end{lemma}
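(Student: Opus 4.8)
The plan is to prove the cycle $(1)\Rightarrow(2)\Rightarrow(3)\Rightarrow(1)$, with the implication $(2)\Rightarrow(3)$ carrying essentially all of the work. The implication $(1)\Rightarrow(2)$ is immediate, since rank-one matrices form a subclass of $\Pn_3(I)$. For $(3)\Rightarrow(1)$ I would introduce $\Gamma:=\begin{pmatrix}1&1&c\\1&1&c\\c&c&c\end{pmatrix}$ and observe the Hadamard factorization $(g,f)_{T_3}[A]=g[A]\circ\Gamma$ for every $A\in\Pn_3(I)$ (on the block $\{1,2\}$ the entries of $\Gamma$ are $1$, and the remaining off-block entries contribute $c=f/g$). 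Since $g(z)=\alpha z^{m}\overline z^{k}$ is a Herz function, $g[A]=\alpha A^{\circ m}\circ\overline{A}^{\circ k}\in\Pn_3$ by the Schur product theorem (Theorem~\ref{Schur}), and a direct computation of the quadratic form shows $\Gamma\in\Pn_3$ exactly when $c\in[0,1]$; hence $g[A]\circ\Gamma\in\Pn_3$, again by Schur.

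For $(2)\Rightarrow(3)$ the idea is to test positivity only on rank-one matrices $A=vv^{*}$, for admissible $v=(v_1,v_2,v_3)$ (i.e.\ all $v_i\overline{v_j}\in I$), for which $g(a_{ij})=w_i\overline{w_j}$ with $w_i:=\sqrt\alpha\,v_i^{m}\overline{v_i}^{k}$. Writing out $u^{*}\big((g,f)_{T_3}[A]\big)u\ge 0$ and abbreviating $p:=f(v_1\overline{v_3})$, $q:=f(v_2\overline{v_3})$, $r:=f(|v_3|^2)$, the quadratic form is $|\overline{u_1}w_1+\overline{u_2}w_2|^{2}+r|u_3|^{2}+2\,\mathrm{Re}\big[u_3(\overline{u_1}p+\overline{u_2}q)\big]$. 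In the generic case $\alpha>0$, $m+k\ge 1$ and $v_1,v_2\neq 0$ (so $(w_1,w_2)\neq 0$), I would substitute the kernel vector $(u_1,u_2)=(\overline{w_2},-\overline{w_1})$ of the first block: the square vanishes and the surviving terms $r|u_3|^{2}+2\,\mathrm{Re}\big[u_3(w_2p-w_1q)\big]$ can be made negative by a small suitable $u_3$ unless $w_2p=w_1q$. This yields the proportionality relation $f(v_1\overline{v_3})\,w_2=f(v_2\overline{v_3})\,w_1$.

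The crux is promoting this pointwise relation to a single global constant. Dividing by $g$, set $\phi:=f/g$ on $\{z\in I:g(z)\neq 0\}=I\setminus\{0\}$; a short computation shows the proportionality relation is exactly $\phi(v_1\overline{v_3})=\phi(v_2\overline{v_3})$. Thus for fixed $v_3$ the value $\phi(v_1\overline{v_3})$ is independent of $v_1$, and since any two points $z_1,z_2\in I\setminus\{0\}$ can be written as $z_i=v_i\overline{v_3}$ with a common admissible $v_3$ (take $v_3>0$ real with $\max_i|z_i|/\sqrt\rho<v_3<\sqrt\rho$), the function $\phi$ is constant, say $\phi\equiv c$, giving $f=cg$ on $I\setminus\{0\}$. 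To include $z=0$ I would test the rank-one matrix with $v=(v_1,0,0)$: the $\{1,3\}$ principal minor forces $f(0)\big(\alpha|v_1|^{2(m+k)}-f(0)\big)\ge 0$, and letting $v_1\to 0$ gives $f(0)^{2}\le 0$, hence $f(0)=0=cg(0)$. Finally, evaluating condition $(2)$ on the constant matrix $x\mathbf{1}_3$ for $x\in I\cap\R_{>0}$ (where $g(x)>0$) forces $c$ to be real and $\Gamma\in\Pn_3$, i.e.\ $c\in[0,1]$.

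The main obstacle is precisely this middle step: the kernel computation only delivers a proportionality constant that a priori depends on the chosen vector $v$, and the real content is the rescaling observation collapsing all these local constants into one global $c$. The remaining work is bookkeeping for the degenerate monomials: if $\alpha=0$ then $g\equiv 0$ and a vanishing diagonal entry forces $f\equiv 0$ on the relevant entries (so $f=cg$ trivially), while if $m=k=0$ then $g\equiv\alpha$ is constant, the proportionality relation reads $f(v_1\overline{v_3})=f(v_2\overline{v_3})$ and directly shows $f$ constant with $f(0)=cg(0)$ automatic; in both cases the range $c\in[0,1]$ again follows from the constant-matrix test.
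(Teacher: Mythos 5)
Your proof is correct, and while it shares the paper's overall strategy---extract $f=cg$ from rank-one test matrices, then verify sufficiency via the Schur product theorem---the key implication $(2)\Rightarrow(3)$ is executed along a genuinely different route. The paper restricts to the one-parameter family $A_w(z)$ of \eqref{Mat2T2L1}, takes the Schur complement with respect to the $(3,3)$ entry $f(|w|)$, and reads off $f(z_1)=\frac{f(|w|)}{g(|w|)}g(z_1)$ from the resulting determinant being the negative of a square; this forces a preliminary step (the matrix in \eqref{Mat1T2L1}) guaranteeing $f(|w|)>0$ before one may divide by it. You instead test a general rank-one $vv^{*}$ against a vector extending the kernel direction $(\overline{w_2},-\overline{w_1})$ of the singular $g$-block, obtaining $w_2 f(v_1\overline{v_3})=w_1 f(v_2\overline{v_3})$ without ever dividing by a value of $f$ (only by $g$, whose zero set is known); the price is the extra gluing step, which your connectivity argument handles correctly, including the admissibility window $v_3\in(\max_i|z_i|/\sqrt{\rho},\sqrt{\rho})$. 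Your bookkeeping is in fact more complete than the paper's: the paper dismisses $g\equiv 0$ with ``nothing to prove'' although one must still deduce $f\equiv 0$ there (via the vanishing diagonal entries, as you do), and it does not separately discuss $f(0)$ or the constant case $m=k=0$. Finally, your sufficiency argument $(g,f)_{T_3}[A]=g[A]\circ\Gamma$ is the paper's decomposition $cg[A]+(1-c)(g,\mathbf{0})_{T_3}[A]$ repackaged as a Hadamard product with the PSD pattern matrix $\Gamma$; both buy the same thing from Theorem~\ref{Schur}.
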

\begin{proof} ($1$)$\implies$($2$) is obvious. We will prove ($2$)$\implies$($3$) and ($3$)$\implies$($1$).\medskip

\noindent($2$)$\implies$($3$): 
If $f\equiv 0$ or $g\equiv 0$, then there is nothing to prove. So suppose $f\not\equiv 0$ and $g\not\equiv 0$ (i.e., $\alpha \neq 0$), and note that as the operation $(g,f)_{T_3}[-]$ preserves positive semidefiniteness, we must have $f(\overline{z})=\overline{f(z)}$ for all $z\in I$, and $f(x)\geq 0$ for all $x\in I\cap \R_{\geq 0}$. We now relate $f(z)$ and $f(w)$ for $z,w\in I$ with $|z|\leq |w|$. Suppose $f(w)\neq 0$ for some $w\in I$. Then $f(t)>0$ for all $t\in [|w|,\rho)$, because 
\begin{align}\label{Mat1T2L1}
\begin{pmatrix}
g({|w|}^2/t) & g(w|w|/t) & f(w) \\
g(\overline{w} |w|/t) & g({|w|}^2/t) & f(|w|) \\
f(\overline{w}) & f(|w|) & f(t) \\
\end{pmatrix} \in\Pn_3 \mbox{ for all } t\in [|w|,\rho).
\end{align}
We thus pick $w\in I\setminus \{0\}$ such that $f(|w|) > 0$. Now, fix $z\in I$ such that $0 \leq |z| \leq |w| < \rho$, and define the rank-one matrix

\begin{align}\label{Mat2T2L1}
A_{w}(z):=\frac{1}{|w|}\begin{pmatrix}
z \\
w \\
w \\
\end{pmatrix}
\begin{pmatrix}
\overline{z} & \overline{w} & \overline{w} \\
\end{pmatrix}=
\begin{pmatrix}
{|z|}^2/{|w|} & z_1 & z_1\\
\overline{z_1} & |w| & |w| \\
\overline{z_1} & |w| & |w| \\
\end{pmatrix},
\mbox{ where }z_1:=z\dfrac{\overline{w}}{|w|}.
\end{align}
Since $A_{w}(z) \in \Pn_3(I)$, 
\begin{align*}
(g,f)_{T_3}[A_{w}(z)]=
\begin{pmatrix}
g({|z|}^2/{|w|}) & g(z_1) & f(z_1)\\
\overline{g(z_1)} & g(|w|) & f(|w|) \\
\overline{f(z_1)} & f(|w|) & f(|w|) \\
\end{pmatrix}\in\Pn_3.
\end{align*}
Using the fact that $g(z)=\alpha z^{m} \overline{z}^k$, the Schur complement of $((g,f)_{T_3}[A_{w}(z)])_{33}$ equals
\begin{align*}
\begin{pmatrix}
g({|z_1|}^2)/g({|w|})-{|f(z_1)|}^2/{f(|w|)} & g(z_1)-f(z_1) \\
\overline{g({z_1})}-\overline{f(z_1)} & g(|w|)-f(|w|) \\
\end{pmatrix},
\end{align*} 
the determinant of which is
\begin{align*}
-\Bigg{|}g(z_1)\sqrt{\dfrac{f(|w|)}{g(|w|)}} - f(z_1)\sqrt{\dfrac{g(|w|)}{f(|w|)}}\Bigg{|}^2.
\end{align*}
This determinant is nonnegative, so
$f(z_1)=\dfrac{f(|w|)}{g(|w|)} g(z_1)$.
Since $z \mapsto z_1=z\dfrac{\overline{w}}{|w|}$ is a bijection and it preserves the norm,

\begin{align}\label{OnlyOneSquare-Complex-eqn}
f(z)=\dfrac{f(|w|)}{g(|w|)} g(z) \mbox{ where } 0 \leq |z| \leq |w|<\rho, \mbox{ whenever } f(|w|)> 0 \mbox{ for } |w| > 0.
\end{align} 

Since $0 < \dfrac{f(|w|)}{g(|w|)}\leq 1$ whenever $f(|w|)>0$ for $|w|>0$, combining \eqref{Mat1T2L1} and \eqref{OnlyOneSquare-Complex-eqn}, we have 
\begin{align*}
f(z)=cg(z) \text{ for all } z\in I,\text{ where } c\in [0,1].
\end{align*}
($3$)$\implies$($1$):
For $A\in\Pn_3(I)$, using the Schur Product Theorem~\ref{Schur}, we have,
\begin{align*}
(g,f)_{T_3}[A] = f[A]+ (g-f,\mathbf{0})_{T_3}[A]=cg[A]+(1-c)(g,\mathbf{0})_{T_3}[A]\in \Pn_3,
\end{align*}
for $c\in[0,1]$. This proves the lemma.
\end{proof}

\begin{lemma}\label{T2L2}
Let $I= D(0,\rho)$ for $0<\rho\leq\infty$, and $g,f:I\to\C$. Suppose $T_3=\{\{1,2\},\{3\}\}$. Assuming $g(z):=\alpha z^{m}\overline{z}^k$ for $\alpha\geq 0$ and $m,k\in\Z_{\geq 0}$, the following are equivalent:
\begin{itemize}
\item[1.] $(g,f)_{T_3}[A]\in\Pn_3$ for all $A\in\Pn_3(I)$.
\item[2.] $(g,f)_{T_3}[A]\in\Pn_3$ for all rank-one $A\in\Pn_3(I)$.
\item[3.] $f(z)=cg(z)$ for all $z\in I$, where $c\in[-1,1]$.
\end{itemize}
\end{lemma}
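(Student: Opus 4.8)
The plan is to mirror the three-step strategy of Lemma~\ref{T2L1}, the essential new feature being that under $T_3=\{\{1,2\},\{3\}\}$ the function $f$ no longer occupies any diagonal slot of $(g,f)_{T_3}[A]$: the $(3,3)$ entry now carries $g$ rather than $f$. The implication $(1)\Rightarrow(2)$ is immediate by restriction. I would dispose of the converse $(3)\Rightarrow(1)$ next, since it is clean: for $A=(a_{ij})\in\Pn_3(I)$ the identity $f=cg$ yields
\begin{align*}
(g,f)_{T_3}[A]=g[A]\circ M_c,\qquad M_c:=\begin{pmatrix} 1 & 1 & c\\ 1 & 1 & c\\ c & c & 1\end{pmatrix}.
\end{align*}
Here $g[A]=\alpha\,A^{\circ m}\circ\overline{A}^{\,\circ k}\in\Pn_3$ by Theorem~\ref{Schur} (using that $\overline{A}\in\Pn_3$), and a direct Schur-complement computation shows $M_c\in\Pn_3$ exactly when $|c|\leq 1$ (the endpoints $M_{\pm1}$ are rank-one, while $|c|>1$ is ruled out by comparing $(1,1,0)$ against $(0,0,1)$). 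Hence $(g,f)_{T_3}[A]\in\Pn_3$ for $c\in[-1,1]$ by a second application of Theorem~\ref{Schur}.

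The substantive direction is $(2)\Rightarrow(3)$. After discarding the trivial cases $f\equiv0$ and $g\equiv0$ as in Lemma~\ref{T2L1}, Hermitianness of $(g,f)_{T_3}[A]$ forces $f(\overline z)=\overline{f(z)}$, so in particular $f$ is real on $I\cap\R$. I would reuse the rank-one test matrix $A_w(z)$ of \eqref{Mat2T2L1}; now, however,
\begin{align*}
(g,f)_{T_3}[A_{w}(z)]=
\begin{pmatrix}
g({|z|}^2/{|w|}) & g(z_1) & f(z_1)\\
\overline{g(z_1)} & g(|w|) & f(|w|) \\
\overline{f(z_1)} & f(|w|) & g(|w|) \\
\end{pmatrix}\in\Pn_3,\qquad z_1=z\frac{\overline w}{|w|},
\end{align*}
the only change from Lemma~\ref{T2L1} being the bottom-right entry $g(|w|)$. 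Fixing $w$ with $\alpha,|w|>0$ so that $g(|w|)>0$ and taking the Schur complement of this $(3,3)$ entry, the multiplicativity of $g$ (which gives $g(|z|^2/|w|)\,g(|w|)=|g(z_1)|^2$) collapses the determinant to $-\bigl|g(z_1)f(|w|)/g(|w|)-f(z_1)\bigr|^2$. Nonnegativity then forces $f(z_1)=\bigl(f(|w|)/g(|w|)\bigr)g(z_1)$, and since $z\mapsto z_1$ is a norm-preserving bijection this yields $f(z)=c_w\,g(z)$ for all $|z|\leq|w|$, with $c_w:=f(|w|)/g(|w|)\in\R$. Comparing the constants on overlapping discs shows $c_w$ is independent of $w$, so $f\equiv c\,g$ on all of $I$.

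It remains to pin down the range of $c$, and this is where the behaviour departs from Lemma~\ref{T2L1}. Because $f$ is absent from the diagonal, there is no longer any constraint forcing $f(x)\geq0$ over $I\cap\R_{\geq0}$, so the sign of $c$ is unconstrained \emph{a priori}. Testing on $x\mathbf{1}_3$ for a single $x\in I\cap\R_{>0}$ gives $(g,f)_{T_3}[x\mathbf{1}_3]=g(x)\,M_c$, whose positivity is equivalent (for $g(x)>0$) to $M_c\in\Pn_3$, i.e.\ to $c^2\leq1$; thus $c\in[-1,1]$. I expect the main (though modest) obstacle to be exactly this bookkeeping: isolating the clean $-\lvert\cdots\rvert^2$ form of the Schur-complement determinant with the new $g(|w|)$ in the corner, and then verifying that it is precisely the loss of the diagonal positivity constraint that enlarges the admissible interval from $[0,1]$ to $[-1,1]$, with both endpoints realized by the rank-one matrices $M_{\pm1}$.
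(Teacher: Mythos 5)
Your proof is correct, and the substantive direction $(2)\Rightarrow(3)$ follows the paper's argument essentially verbatim: the same rank-one test matrix $A_w(z)$, the same Schur complement of the $(3,3)$ entry, the same collapse of the determinant to $-\lvert\,\cdot\,\rvert^2$ via multiplicativity of $g$, forcing $f\equiv c\,g$. The only genuine divergence is in the easy direction and in how $c\in[-1,1]$ is extracted. For $(3)\Rightarrow(1)$ the paper writes $(g,f)_{T_3}[A]=cg[A]+(1-c)(g,\mathbf{0})_{T_3}[A]$, which handles $c\in[0,1]$ directly, and then disposes of $c\in[-1,0)$ by observing that every principal minor is a function of $c^2$; your factorization $(g,f)_{T_3}[A]=g[A]\circ M_c$ with $M_c=vv^T+(1-c^2)e_3e_3^T$, $v=(1,1,c)^T$, treats both signs of $c$ at once via the Schur product theorem and is arguably cleaner (it is in fact the device the paper itself deploys later, in the proof of $(2.\mathrm{b})\Rightarrow(1)$ of Theorem~\ref{T2}). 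For the bound on $c$, the paper reads $c\in[-1,1]$ off the $(2,2)$ entry of the positive semidefinite Schur complement $B$, whereas you test $x\mathbf{1}_3$ separately; both are fine, though the paper's route is marginally more economical since $B$ is already in hand. Your explicit remark that the constants $c_w$ agree on overlapping discs is a small point the paper leaves implicit.
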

\begin{proof}
($1$)$\implies$($2$) is obvious; we will show ($2$)$\implies$($3$) and ($3$)$\implies$($1$).\medskip

\noindent($2$)$\implies$($3$): If $f\equiv 0$ or $g\equiv 0$, then there is nothing to prove. So suppose $f\not\equiv 0$ and $g\not\equiv 0$ (i.e., $\alpha\neq 0$), and note that as $(g,f)_{T_3}[-]$ preserves positive semidefiniteness, we must have $f(\overline{z})=\overline{f(z)}$ for all $z\in I$. Let $z,w\in I$ be such that $|z|\leq |w|$ and $|w| > 0$. For $A_{w}(z)$ defined in \eqref{Mat2T2L1}, 
\begin{align*}
(g,f)_{T_3}[A_{w}(z)]=
\begin{pmatrix}
g({|z|}^2/{|w|}) & g(z_1) & f(z_1)\\
\overline{g(z_1)} & g(|w|) & f(|w|) \\
\overline{f(z_1)} & f(|w|) & g(|w|) \\
\end{pmatrix}\in\Pn_3.
\end{align*}
The Schur complement of $((g,f)_{T_3}[A_{w}(z)])_{33}$ is
\begin{align*}
B:=\frac{1}{g(|w|)}\begin{pmatrix}
|g({z_1})|^2-{|f(z_1)|}^2 & g(|w|) g(z_1)-f(|w|) f(z_1) \\
g(|w|) \overline{g(z_1)}-f(|w|)\overline{f(z_1)} & {g(|w|)}^2-{f(|w|)}^2 \\
\end{pmatrix}\in\Pn_2 
\end{align*} 
and the determinant
\begin{align*}
\det(B)= -\frac{1}{{g(|w|)}^2} \big{|}f(|w|)g(z_1)-g(|w|)f(z_1)\big{|}^2.
\end{align*}
This determinant is nonnegative, and so we have
$
f(z_1)=\dfrac{f(|w|)}{g(|w|)} g(z_1);
$
 since $z \mapsto z_1=\dfrac{\overline{w}}{|w|} z$ is a bijection which preserves the norm, we obtain
\begin{align*}
f(z)=\dfrac{f(|w|)}{g(|w|)}g(z), \qquad \mbox{whenever }  |z| \leq |w|< \rho \mbox{ and }|w|>0.
\end{align*}
Let $c:=\dfrac{f(|w|)}{g(|w|)}$; then $c\in[-1,1]$, as $B\in\Pn_2$. Hence, 
\begin{align*}
f(z)=cg(z) \mbox{ for all } z\in I, \mbox{ where } c\in[-1,1].
\end{align*}
($3$)$\implies$($1$):
For $A\in\Pn_3(I)$,
\begin{align*}
(g,f)_{T_3}[A] =  f[A]+(g-f,\mathbf{0})_{T_3}[A]=cg[A]+(1-c)(g,\mathbf{0})_{T_3}[A].
\end{align*}
Clearly $(g,f)_{T_3}[A]$ is positive semidefinite for $c\in [0,1]$. Since the principal minors of $(g,f)_{T_3}[A]$ are functions of $c^2$, it is positive semidefinite for $c\in [-1,0)$ too, which completes the proof.
\end{proof}

Combining Lemma~\ref{T2L1} and Lemma~\ref{T2L2}, we conclude that as soon as $f$ is forbidden from some $2\times 2$ diagonal/principal block, i.e. there is a non-singleton set in $T_n$ for some $n\geq 3$, $f$ is a scalar multiple of the given Herz function $g$ (for some $c\in [-1,1]$). However, the discussion on the scalar needs more attention. For instance in Lemma~\ref{T2L1}, $f$ must preserve $\R_{\geq 0}$ since it is allowed to operate on one of the diagonal entries, and so the scalar $c$ has to be nonnegative. This is not the case in Lemma~\ref{T2L2}, where the scalar $c$ could also be negative. 

The next two results provide the exact possible range of the constant $c$ when each $T_n$ is a partition of $[n]$ and when $g(z)=z$. 

\begin{lemma}\label{LonlyDiag}
Let $I\subseteq\C$ such that $I\cap\R_{> 0}\neq \emptyset$, and $f(z) = cz$  for all $z\in I$ for some $c \in \R$, and suppose $n\geq 2$ is an integer. The following are equivalent:
\begin{itemize}
\item[1.] $f_{*}[A]\in \Pn_n$ for all $A\in\Pn_n(I)$.
\item[2.] $f_{*}[A]\in \Pn_n$ for all rank-one $A\in \Pn_n(I)$.
\item[3.] $c \in [-1/(n-1),1]$.
\end{itemize}
\end{lemma}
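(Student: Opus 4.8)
The key observation is that the operation $f_{*}[-]$ is nothing but a Hadamard (Schur) product against a single fixed matrix. Writing $J_n:=\mathbf{1}_n$ for the $n\times n$ all-ones matrix and setting $M:=(1-c)\Id_n+cJ_n$ — the matrix with $1$ on the diagonal and $c$ in every off-diagonal position — one checks directly from the definition of $f_{*}[-]$ that $f_{*}[A]=A\circ M$ for every $A\in I^{n\times n}$, since $(A\circ M)_{ii}=a_{ii}$ and $(A\circ M)_{ij}=c\,a_{ij}$ for $i\neq j$. Thus the entire lemma reduces to understanding when $M$ is positive semidefinite.

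First I would record the spectrum of $M$. Because $J_n$ has eigenvalue $n$ (with eigenvector $(1,\dots,1)^{T}$) together with the eigenvalue $0$ of multiplicity $n-1$, the matrix $M=(1-c)\Id_n+cJ_n$ has eigenvalues $1+c(n-1)$ (simple) and $1-c$ (with multiplicity $n-1$). Hence $M\in\Pn_n$ if and only if $1+c(n-1)\geq 0$ and $1-c\geq 0$, that is, if and only if $c\in[-1/(n-1),1]$.

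With this computation in hand the three implications are short. The implication $(1)\Rightarrow(2)$ is immediate. For $(2)\Rightarrow(3)$ I would use a single rank-one test matrix: choose $t\in I\cap\R_{>0}$ (nonempty by hypothesis) and apply the assumption to $A=tJ_n\in\Pn_n(I)$, which is rank one; since $J_n\circ M=M$, this gives $f_{*}[tJ_n]=tM$, whose positive semidefiniteness forces $M\in\Pn_n$, and hence $c\in[-1/(n-1),1]$ by the spectral computation above. For $(3)\Rightarrow(1)$, given $c\in[-1/(n-1),1]$ the matrix $M$ is positive semidefinite, so the Schur Product Theorem~\ref{Schur} yields $f_{*}[A]=A\circ M\in\Pn_n$ for every $A\in\Pn_n(I)$.

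There is no serious obstacle here: the whole content is the identification $f_{*}[A]=A\circ M$ together with the elementary eigenvalue computation for $M$, and the only place the hypothesis $I\cap\R_{>0}\neq\emptyset$ is needed is to select the scalar $t$ producing a genuine rank-one positive semidefinite test matrix with entries in $I$.
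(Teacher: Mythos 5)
Your proof is correct, and for the implication $(3)\Rightarrow(1)$ it takes a genuinely different route from the paper. The necessity direction $(2)\Rightarrow(3)$ is essentially identical in both arguments: the paper also tests on $x\mathbf{1}_n$ for $x\in I\cap\R_{>0}$ and reads off the eigenvalues $(1-c)x$ and $(1+(n-1)c)x$, which is your computation of the spectrum of $M$ in disguise. The difference is in sufficiency. The paper splits into cases: for $c\in[0,1]$ it writes $f_{*}[A]=cA+(1-c)\mathbf{0}_{*}[A]$, and for $c\in[-1/(n-1),0)$ it uses the decomposition $f_{*}[A]=(1+(n-1)c)\mathbf{0}_{*}[A]+|c|(n\mathbf{0}_{*}[A]-A)$, reducing matters to showing $n\mathbf{0}_{*}[A]-A\in\Pn_n$ via a perturbation-and-rescaling ``correlation trick'' and the bound $\lambda_{\max}(C)\leq\mathrm{tr}(C)=n$ for correlation matrices. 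You instead observe once and for all that $f_{*}[A]=A\circ M$ with $M=(1-c)\Id_n+c\mathbf{1}_n$, diagonalize $M$, and invoke the Schur product theorem — a single uniform argument covering both signs of $c$. This is precisely the ``alternate simpler proof'' that the paper itself records in the remark following Theorem~\ref{GenLonlyDiag} (there phrased as $f_{T_n}[A]=A\circ f_{T_n}[\mathbf{1}]$), so your route is both legitimate and, for this lemma, cleaner; what the paper's correlation-trick argument buys is an independent, spectrally flavored verification that does not route everything through the Schur product theorem. One cosmetic point: like the paper, your statement tacitly assumes $n\geq 2$ (otherwise $-1/(n-1)$ is undefined and the off-diagonal condition is vacuous), which is harmless but worth a word.
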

\begin{proof}
($1$)$\implies$($2$) is obvious. We start with\\
($2$)$\implies$($3$): For $x\in I\cap\R_{\geq 0}$ the matrix
$
f_{*}[x {\bf 1}_{n}] = c x {\bf 1}_{n} + (1-c) x \Id_n \in \Pn_n.
$
But this matrix has eigenvalues $(1-c)x$ and $(1 + (n-1)c)x$. As these are nonnegative, we obtain $c \in [-1/(n-1),1]$.\\
($3$)$\implies$($1$): Given a square matrix $A$, note that ${\bf 0}_{*}[A]$ is the diagonal matrix having entries $a_{ii},\ i \geq 1$, on the diagonals. Now if $c \in [0,1]$ then for any matrix $A \in \Pn_n(I)$,
\begin{align*}
f_{*}[A] = c A + (1-c) {\bf 0}_{*}[A] \in \Pn_n.
\end{align*}
If instead $c \in [-1/(n-1),0)$, then we write
\begin{align*}
f_{*}[A] = (1 + (n-1)c) {\bf 0}_{*}[A] +  |c| (n {\bf 0}_{*}[A] - A).
\end{align*}
Clearly the first term is positive semidefinite, so it suffices to show that
\begin{align*}
n {\bf 0}_{*}[A] - A \in \Pn_n, \qquad \forall A \in \Pn_n(I).
\end{align*}
For this we use a `correlation trick': since $n$ is fixed, we may perturb $A$ by $\epsilon \Id_n$ for $\epsilon > 0$, thereby assuming $A, {\bf 0}_{*}[A]$ are both positive definite. (The result for such $A$ implies the result for all $A \in \Pn_n(I)$ by sending $\epsilon \to 0^+$.) Now pre- and post- multiplying by the diagonal matrix ${\bf 0}_{*}[A]^{-1/2}$, it suffices to show that $n \Id_n - C \in \Pn_n$, for all correlation matrices $C_{n \times n}$ (i.e., positive matrices with ones on the diagonal). But this is immediate; we provide two proofs. First,
\begin{align*}
\lambda_{\min}(n \Id - C) = n - \lambda_{\max}(C) \geq n - {\rm tr}(C) =
0.
\end{align*}
Alternately, $n \Id_n - C$ has real eigenvalues, and is diagonally
dominant, so we are done by Gershgorin's circle theorem.
\end{proof}

The following result, due to Khare (personal communication), is a generalization of Lemma~\ref{LonlyDiag} to arbitrary partitions $T_n$ of $[n]$.

\begin{theorem}\label{GenLonlyDiag}
Let $I\subseteq \C$ such that $I\cap\R_{> 0}\neq\emptyset$. Suppose $n \geq 2$ is an integer, and $T_n \subseteq 2^{[n]}$ is a partition of $[n]$ into $k \geq 2$ subsets.
Assuming that $f(z) = cz$ for all $z\in I$ for some $c \in \R$, the following are equivalent:
\begin{itemize}
\item[1.]$f_{T_n}[A]\in\Pn_n$ for all $A\in\Pn_n(I)$.
\item[2.]$f_{T_n}[A]\in\Pn_n$ for all rank-one $A\in \Pn_n(I)$.
\item[3.]$c \in [-1/(k-1),1]$.
\end{itemize}
\end{theorem}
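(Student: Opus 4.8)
The plan is to reduce the positive semidefiniteness of $f_{T_n}[A]$ to a single scalar condition on $c$ by passing to a Gram representation of $A$ and observing that only the number of blocks $k$ survives. Write $T_n=\{U_1,\dots,U_k\}$ with $|U_\ell|=n_\ell$, and record the entrywise identity $f_{T_n}[A]=cA+(1-c)B$, where $B$ is the ``block-diagonal part'' of $A$ (the principal submatrices on the $U_\ell$, padded with zeros elsewhere). Given $A\in\Pn_n(I)$ with $a_{ij}=\langle v_i,v_j\rangle$ for suitable vectors $v_1,\dots,v_n$, set $w_\ell:=\sum_{i\in U_\ell} y_i v_i$ for $y\in\C^n$. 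A direct computation then gives
\begin{align*}
y^{*}f_{T_n}[A]\,y = c\Big\|\sum_{\ell=1}^{k} w_\ell\Big\|^2 + (1-c)\sum_{\ell=1}^{k}\|w_\ell\|^2 =: Q(w_1,\dots,w_k),
\end{align*}
so that $f_{T_n}[A]\in\Pn_n$ is governed entirely by the vector-valued quadratic form $Q$, which depends on $(T_n)_{n\geq1}$ only through $k$.

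For $(3)\Rightarrow(1)$ I would show $Q\geq 0$ whenever $c\in[-1/(k-1),1]$. When $c\in[0,1]$ both coefficients are nonnegative and there is nothing to do (this also follows from the Schur product theorem, as $B=D\circ A$ with $D$ the positive semidefinite block-indicator matrix). When $c\in[-1/(k-1),0)$ the key estimate is the $k$-term Cauchy--Schwarz inequality $\big\|\sum_{\ell=1}^{k} w_\ell\big\|^2\leq k\sum_{\ell=1}^{k}\|w_\ell\|^2$; multiplying by $c<0$ and adding $(1-c)\sum_\ell\|w_\ell\|^2$ yields $Q\geq (1+(k-1)c)\sum_\ell\|w_\ell\|^2\geq 0$. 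For the necessity $(2)\Rightarrow(3)$ I would test against the rank-one matrix $x\mathbf{1}_n$ (all entries equal to $x$) for some $x\in I\cap\R_{>0}$; here every $w_\ell$ is a scalar multiple of one fixed vector, so $Q$ collapses to the scalar form $c(\sum_\ell s_\ell)^2+(1-c)\sum_\ell s_\ell^2$. Taking $s=(1,\dots,1)$ forces $1+(k-1)c\geq 0$, and taking $s$ supported with opposite signs on two of the blocks (possible since $k\geq 2$) forces $1-c\geq 0$. The implication $(1)\Rightarrow(2)$ is immediate.

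The main obstacle is the $c<0$ case of $(3)\Rightarrow(1)$. The proof of Lemma~\ref{LonlyDiag} handled the analogous step by a ``correlation trick'', congruence-reducing $nB-A$ to $n\Id-C$ for a correlation matrix $C$ and bounding $\lambda_{\max}(C)\leq\operatorname{tr}(C)=n$. That argument is special to singleton blocks: after the same reduction here one obtains a matrix $C$ with identity \emph{blocks} and $\operatorname{tr}(C)=n>k$ in general, so the trace bound only gives $\lambda_{\max}(C)\leq n$ and is too weak to establish $kB-A\in\Pn_n$. The correct replacement is precisely the inequality $\big\|\sum_{\ell=1}^{k}w_\ell\big\|^2\leq k\sum_\ell\|w_\ell\|^2$, whose constant $k$ is the number of blocks rather than $n$; recognizing that the sharp bound needed is this block-count Cauchy--Schwarz inequality (equivalently, that the reduced $k\times k$ quadratic form is blind to the block sizes) is the crux of the generalization.
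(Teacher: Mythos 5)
Your proof is correct, and for the hard direction $(3)\Rightarrow(1)$ with $c<0$ it takes a genuinely different route from the paper. The paper proves that implication by induction on the number of blocks $k$, peeling off one block at a time via Schur complements (after permuting to make the blocks contiguous and perturbing by $\epsilon\Id_n$ to make the diagonal blocks invertible), with the key identity $f_{T_m}[A']-c^2A'=(1-c^2)h_{T_m}[A']$ for $c'=c/(1+c)$; a remark then gives a second proof by writing $f_{T_n}[A]=A\circ f_{T_n}[\mathbf{1}]$ and checking the pattern matrix. Your Gram-vector identity $y^{*}f_{T_n}[A]y=c\big\|\sum_{\ell}w_\ell\big\|^2+(1-c)\sum_{\ell}\|w_\ell\|^2$, combined with the $k$-term Cauchy--Schwarz bound $\big\|\sum_\ell w_\ell\big\|^2\leq k\sum_\ell\|w_\ell\|^2$, yields the same conclusion in one stroke, with no induction, no reordering, and no invertibility or perturbation argument; it is in effect the vectorized form of the eigenvalue computation $\{1-c,\;1+(k-1)c\}$ underlying Lemma~\ref{LonlyDiag}, applied directly to $f_{T_n}[A]$ rather than factored through the all-ones pattern. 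Your necessity step (testing $x\mathbf{1}_n$ with block sums $s_\ell$ equal to $(1,\dots,1)$ and to $\pm1$ on two blocks) is the same in substance as the paper's reduction to Lemma~\ref{LonlyDiag} on a $k\times k$ principal submatrix, and your diagnosis of why the correlation trick does not generalize verbatim --- the trace bound gives $\lambda_{\max}\leq n$ where $k$ is needed --- is accurate, with the block Cauchy--Schwarz inequality being exactly the sharp replacement. What the paper's induction buys is an explicit Schur-complement mechanism; what your approach buys is brevity and a self-contained argument uniform over all $I$ with $I\cap\R_{>0}\neq\emptyset$.
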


\begin{proof}$(1)\implies(2)$ is trivial, and ($2$)$\implies$($3$) follows by applying Lemma~\ref{LonlyDiag} to a principal submatrix of $f_{T_n}[A]$. We prove,
($3$)$\implies$($1$): if $c \in [0,1]$ then $f_{T_n}[A]$ is positive semidefinite for all $A\in\Pn_n(A)$:
\begin{align*}
f_{T_n}[A] = c A + (1-c) \mathbf{0}_{T_n}(A)   \in \Pn_n.
\end{align*}
Henceforth, we thus suppose $c \in [-1/(k-1),0)$, and let $T_n = \{ J_1, \dots, J_k \}$ with $J_1 \sqcup \cdots \sqcup J_k = [n]$. Since positive semidefinite matrices are closed under conjugation with permutation matrices, it is enough to prove (1) for contiguous $J_i$, i.e., $J_1=\{1,2,\dots,n_{j_1}\}$, $J_2=\{n_{j_1}+1,n_{j_1}+2,\dots,n_{j_2}\}$ and so on. We will write matrices in $\Pn_n(I)$ in block-form, corresponding to the $J_j$, say $A =(A_{ij})_{i,j = 1}^k$. Notice that
\begin{align*}
f_{T_n}[A] = (1 + |c|) \mathbf{0}_{T_n}(A) - |c| A = (B_{ij})_{i,j=1}^k,  \text{ where }  
B_{ij} = 
\begin{cases}
A_{ij}, & \text{if } i=j,\\
c A_{ij}, & \text{if } i \neq j.
\end{cases}
\end{align*}
We may use a perturbation by adding $\epsilon \Id_n$ to $A$ and letting
$\epsilon \to 0^+$. Since $f_{T_n}[A + \epsilon \Id_n] = f_{T_n}[A] + \epsilon \Id_n$, it therefore suffices to show the result under the assumption that each $A_{ii}$, and hence $A$, is positive definite. We assume this henceforth.

We now show the result by induction on $k \geq 2$, with no restrictions on $n \geq k$ except that $T_n$ contains $k$ subsets. If $k=2$, then we are to show
\begin{align*}
\begin{pmatrix}
A & B \\
B^* & C
\end{pmatrix} \in \Pn_n(I)
\qquad \implies \qquad
\begin{pmatrix}
A & cB \\
cB^* & C
\end{pmatrix} \in \Pn_n
\end{align*}
for $c \in [-1,1]$, assuming that $A,C$ are invertible. But this is immediate by using Schur complements: the hypotheses imply
\begin{align*}
C - B^* A^{-1} B \in \Pn_n(I) \qquad \implies \qquad C - c^2 B^* A^{-1} B \in \Pn_n,
\end{align*}
which implies the result for $k=2$.

For the induction step, suppose we know the result for $k \geq 2$, and $T_n$ consists of $k+1$ parts, say $J_1, \dots, J_{k+1}$. Define
\begin{align*}
m := \sum_{j=1}^k |J_j|, \qquad T_m := \{ J_1, \dots, J_k \}
\end{align*}
without loss of generality; then we know by the induction hypothesis that
\begin{align}\label{Eindhyp}
c' \in [-1/(k-1),0), \ \ h(z) := c'z \implies
h_{T_m}[A'] \in \Pn_m\ \ \mbox{ for all }A' \in \Pn_m(I).
\end{align}

Now suppose $A = (A_{ij})_{i,j=1}^{k+1} \in \Pn_n(I)$. Write
\begin{align*}
A = \begin{pmatrix} A' & B \\ B^* & A_{k+1,k+1} \end{pmatrix}, \qquad
\text{where} \quad A' := A_{[m] \times [m]} = (A_{ij})_{i,j=1}^k \in
\Pn_m(I).
\end{align*}
Hence
\begin{align*}
f_{T_n}[A] = \begin{pmatrix} f_{T_m}[A'] & cB \\ cB^* & A_{k+1,k+1}
\end{pmatrix}.
\end{align*}
Now using Schur complements, we have
\begin{align*}
A \in\Pn_n \implies  A'-B A_{k+1,k+1}^{-1} B^* \in\Pn_m 
&\implies c^2 A' - c^2 B A_{k+1,k+1}^{-1} B^* \in \Pn_m \\
&\implies \begin{pmatrix} c^2 A' & cB \\ c B^* & A_{k+1,k+1}
\end{pmatrix} \in\Pn_n,
\end{align*}
where we use that $|c| < 1$.

Thus the proof is complete if we can show that $f_{T_m}[A'] - c^2 A' \in \Pn_m$, for $c \in [-1/k,0)$. Note here that $|c| < 1$ since $k \geq 2$.

Now one computes:
\begin{align*}
\frac{1}{1-c^2} (f_{T_m}[A'] - c^2 A') = (B_{ij})_{i,j=1}^k, \qquad
\text{where} \qquad B_{ij} = \begin{cases}
A_{ij}, & \text{if } i=j,\\
\frac{c}{1+c} A_{ij}, & \text{if } i \neq j.
\end{cases}
\end{align*}
It is easily verified that
\begin{align*}
c \in [-1/k, 0) \quad \Longleftrightarrow \quad
c' \in [-1/(k-1),0), \quad \text{where } c' := \frac{c}{1+c}.
\end{align*}
Thus if we define $h(z) := c'z$, then from above, we have
\begin{align*}
f_{T_m}[A'] - c^2 A' = (1-c^2) h_{T_m}[A'],
\end{align*}
and this is positive semidefinite by \eqref{Eindhyp}. This shows the induction step, and concludes the proof.
\end{proof}

\begin{remark}
The implication $(3)\implies (1)$ in Theorem~\ref{GenLonlyDiag} has an alternate simpler proof using the Schur Product Theorem~\ref{Schur}. Note that $f_{T_n}[A]=A \circ f_{T_n}[\mathbf{1}]$, thus it is enough to show that $f_{T_n}[\mathbf{1}]$ is positive semidefinite. The principal submatrices of $f_{T_n}[\mathbf{1}]$ which are of the form $f_{*}[\mathbf{1}]$ are positive semidefinite by Lemma~\ref{LonlyDiag}, and the remaining principal submatrices have two identical rows, so their determinant vanishes.
\end{remark}

With Theorem~\ref{GenLonlyDiag}, we can now see the dependence of the scalar $c$ on the sequence $(T_n)_{n\geq 1}$, which consists only of partitions of $[n]$ for all $n\geq 1$. The scalar $c$ depends on $\max_{n\geq 1}|T_n|$, a ``global'' property of the sequence $(T_n)_{n\geq 1}$. It is also worth noting that when $\max_{n\geq 1}|T_n|=\infty$, the scalar $c\in [0,1]$; this phenomena shows that $c\geq 0$ (i.e., $f$ preserves $\R_{\geq 0}$) even when $f$ does not have to act on any of the diagonal entries.

To complete this section, we now prove Theorem~\ref{T2}.

\begin{proof}[Proof of Theorem~\ref{T2}]
($1$)$\implies$($2$.a): We first use Lemma~\ref{T2L1} or Lemma~\ref{T2L2} to obtain that $f(z)=cg(z) \mbox{ for all } z\in I, \mbox{ for some } c\in[-1,1].$ However, here $f$ preserves $\R_{\geq 0}$ as it operates on some diagonal entry, implying
$
f(z)=cg(z) \mbox{ for all } z\in I, \mbox{ for some } c\in[0,1].
$\medskip

\noindent($2$.a)$\implies$($1$): For $c\in [0,1]$,
\begin{align*}
(g,f)_{T_n}[A]=f[A]+(g-f,\mathbf{0})_{T_n}[A]=cg[A]+(1-c)(g,\mathbf{0})_{T_n}[A]\in\Pn_n
\end{align*}
for all $A\in\Pn_n(I)$ for all $n\geq 1$.\medskip

\noindent($1$)$\implies$($2$.b): First we invoke Lemma~\ref{T2L2} to conclude that
$
f(z)=cg(z) \mbox{ for all } z\in I,\mbox{ for some}$ $ c\in[-1,1]
$. Now, as $K:=\max_{n\geq 1}|T_n|\in \Z$ we let $|T_N|=K$ for some $N\geq 2$; note that $K\geq 2$. As $(g,f)_{*}[x\mathbf{1}_K]$ is a principal submatrix of $(g,f)_{T_N}[x \mathbf{1}_N]$, hence is positive semidefinite for $ x\in I \cap\R_{\geq 0}$. Invoke Theorem~\ref{GenLonlyDiag} to conclude that $c \in [-1/(K-1),1]$. Similarly, if $K=\infty$ then $c\in [0,1] $.\medskip

\noindent($2$.b)$\implies$($1$):
First suppose $K\geq 2$ is finite, and let $c\in[-1/(K-1),1]$. For $A\in\Pn_n(I)$ for $n\geq 1$, we write 
\begin{align*}
(g,f)_{T_n}[A]=g[A]\circ (\mathbf{1},c\mathbf{1})_{T_n}[A].
\end{align*}
We use Theorem~\ref{GenLonlyDiag} and Theorem~\ref{Schur} to conclude that $(g,f)_{T_n}[A]\in\Pn_n$. When $K=\infty$, we use an argument similar to ($2$.a)$\implies$($1$). This concludes the proof.
\end{proof}

To summarise this section: $f$ is forbidden from acting on at least one $2\times 2$ diagonal/principal block (and a given Herz function $g$ acts on such blocks), and no two such forbidden blocks share a common entry of the matrices. This restriction itself forces $f$ to be a scalar multiple of $g$, i.e. $f \equiv c\cdot g$. With this ``linearity'' at hand, we have shown that the scalar $c \in [0,1]$ if $f$ acts on some diagonal entry (as $f$ must preserve $\R_{\geq 0}$). In other cases, when $f$ is not acting on any diagonal entry, i.e. each $T_n$ is a partition of $[n]$, we saw that the scalar $c\in [-1/(K-1),1]$ where $K:=\max_{n\geq 1}|T_n|\in \Z$. This adds non-absolutely monotonic preservers $f$ (for $g=\Id$, see Corollary~\ref{The-corollary}(2.b)). Finally, if the size of the partition grows without bound, i.e. $\max_{n\geq 1}|T_n|=\infty$, then $c\in [0,1]$.

We will see that the scalar $c$ mentioned in the previous paragraph is necessarily equal to $1$ in the cases resolved in Section~\ref{ProofofT3}. More precisely, we will see that as soon as two forbidden $2\times 2$ blocks share a common (diagonal) entry, $g\equiv f$ over any domain $I\subseteq \C$.

\subsection{A fixed-dimensional result and analogous proofs for real domains}\label{ProofofT2Subsec}
The following is a fixed-dimensional result that follows from the results discussed so far in this section.

\begin{theorem}\label{fixed-dim}
Let $I=D(0,\rho)$ for $0<\rho\leq\infty$, and $g,f:I\to\C$. Suppose $n \geq 2$ be an integer, and $T_n\subseteq 2^{[n]}$ is a partition of $[n]$ such that the cardinality $|T_n|\in\{2,\dots,n-1\}$. Assuming $g(z)=\alpha z^{m}\overline{z}^k$ for $\alpha\geq 0$ and $m,k\in\Z_{\geq 0}$, the following are equivalent:

\begin{itemize}
\item[1.] $(g,f)_{T_n}[A] \in \Pn_n$ for all $A\in\Pn_n([0,\infty))$.
\item[2.] $f(z)=cg(z)$ for all $z\in I$, where $ c \in [-1/(|T_n|-1),1]$.
\end{itemize}
\end{theorem}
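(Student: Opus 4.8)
The plan is to assemble this fixed-dimensional statement from the two workhorses already developed in this section: Lemma~\ref{T2L2}, which produces the functional relation $f=cg$, and Theorem~\ref{GenLonlyDiag}, which pins down the admissible range of the scalar $c$. I would dispatch the easy direction $(2)\implies(1)$ first. Setting $h(z):=cz$, one checks entrywise that
\begin{align*}
(g,f)_{T_n}[A]=g[A]\circ h_{T_n}[\mathbf{1}],
\end{align*}
since the forbidden positions contribute $g(a_{ij})\cdot 1$ and the remaining positions contribute $g(a_{ij})\cdot c=f(a_{ij})$. Here $g[A]=\alpha\, A^{\circ m}\circ\overline{A}^{\circ k}\in\Pn_n$ by the Schur product theorem (Theorem~\ref{Schur}) applied to the positive semidefinite $A$ and $\overline{A}$, while $h_{T_n}[\mathbf{1}]\in\Pn_n$ by the implication $(3)\implies(1)$ of Theorem~\ref{GenLonlyDiag} for the partition $T_n$ into $|T_n|\geq 2$ parts, which is valid precisely for $c\in[-1/(|T_n|-1),1]$. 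A final application of Theorem~\ref{Schur} then gives $(g,f)_{T_n}[A]\in\Pn_n$, and this holds for all $A\in\Pn_n(I)$, hence in particular over $\Pn_n([0,\infty))$.

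For $(1)\implies(2)$ I would first exploit the cardinality constraint $|T_n|\in\{2,\dots,n-1\}$. Having at most $n-1$ parts forces, by pigeonhole, some part $U\in T_n$ with $|U|\geq 2$; having at least two parts lets me select an index $j$ lying in a different part. Passing to the $3\times 3$ principal submatrix indexed by two elements of $U$ together with $j$, the induced forbidden pattern is exactly $T_3=\{\{1,2\},\{3\}\}$. Feeding in the rank-one matrices $A_w(z)$ from \eqref{Mat2T2L1} — embedded into dimension $n$ by padding and conjugating by a permutation matrix, which keeps them in the test class — this principal submatrix becomes precisely the matrix analysed in Lemma~\ref{T2L2}, and that lemma yields $f(z)=cg(z)$ with $c\in[-1,1]$. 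To sharpen the interval I would specialize to $A=x\mathbf{1}_n$ for $x\in I\cap\R_{>0}$ with $g(x)=\alpha x^{m+k}>0$: factoring out the positive scalar $g(x)$ identifies $(g,f)_{T_n}[x\mathbf{1}_n]$ with $g(x)\,h_{T_n}[\mathbf{1}]$ for $h(z)=cz$, so that positivity forces $c\in[-1/(|T_n|-1),1]$ exactly as in the necessity step $(2)\implies(3)$ of Theorem~\ref{GenLonlyDiag} (which itself only tests the all-ones matrix).

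The step demanding the most care is the reduction feeding Lemma~\ref{T2L2}, together with its interplay with the test domain. The matrices $A_w(z)$ carry genuinely complex off-diagonal data, so realizing them as principal submatrices is exactly what upgrades the conclusion from ``$f=cg$ on $I\cap\R_{\geq 0}$'' to ``$f=cg$ on all of $I$''; one must verify that the padding-and-permutation embedding indeed lands in the admissible test class and that the induced $3\times 3$ pattern is $\{\{1,2\},\{3\}\}$ and not a coarser one. By contrast, the extraction of the sharp constant is routine once Theorem~\ref{GenLonlyDiag} is available, since $x\mathbf{1}_n$ is simultaneously rank-one, entrywise nonnegative, and makes the pattern matrix $h_{T_n}[\mathbf{1}]$ fully explicit.
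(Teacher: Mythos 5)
Your proof is correct and follows essentially the same route as the paper's: Lemma~\ref{T2L2}, reached through a $3\times 3$ principal submatrix carrying the pattern $\{\{1,2\},\{3\}\}$, yields $f=cg$ with $c\in[-1,1]$; the all-ones test matrix together with the eigenvalue computation of Lemma~\ref{LonlyDiag} sharpens this to $c\in[-1/(|T_n|-1),1]$; and the converse is the Schur-product factorization $(g,f)_{T_n}[A]=g[A]\circ h_{T_n}[\mathbf{1}]$ combined with Theorem~\ref{GenLonlyDiag}. The one caveat --- inherited from the paper's own proof, which also invokes Lemma~\ref{T2L2} --- is that the forward direction requires the genuinely complex rank-one test matrices $A_{w}(z)$, which lie in $\Pn_n(I)$ but not in the literal test class $\Pn_n([0,\infty))$ of condition (1), so both arguments implicitly read condition (1) as a statement over $\Pn_n(I)$.
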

\begin{proof}
Suppose $(1)$ holds. By using Lemma~\ref{T2L2} we can deduce that $f(z)=cg(z)$ for all $z\in I$, where $c\in [-1,1]$. As a principal submatrix of $(g,f)_{T_n}[x\mathbf{1}]$, for $x\in I\cap \R_{>0}$, equals $(g,f)_{*}[x {\bf 1}] = \alpha x^{m+k}(c {\bf 1_{|T_n|}} + (1-c)  \Id_{|T_n|})$ and is positive semidefinite, using the arguments in Lemma~\ref{LonlyDiag}, we have $(2)$. And $(2)\implies (1)$ follows from Theorem~\ref{GenLonlyDiag} and the Schur Product Theorem~\ref{Schur}.
\end{proof}
\begin{remark}
\item[1.] By following the proofs in this section, results analogous to Theorem~\ref{T2} can be proved similarly for the domains $I=(-\rho,\rho)$, $[0,\rho)$ and $(0,\rho)$ for $0<\rho\leq\infty$. See the row numbered 3 in Table~\ref{Table1} for this classification.
\item[2.] Theorem~\ref{fixed-dim} can also be proved similarly for the real domains mentioned in the previous remark. Moreover, we note that the implication $(1)\implies (2)$ for $[0,\rho)$ and $(0,\rho)$ will follow for $g(x)=\alpha x^{\beta}$ for all $\alpha,\beta\geq 0$.
\end{remark}

\section{Proof of Theorem~\ref{T3}}\label{ProofofT3}
This section classifies the preservers $(g,f)$ for the remaining kind of sequences $(T_n)_{n\geq 1}$, that are not discussed above, i.e. when some $T_n$ is not a partition of a subset of $[n]$. We therefore assume below that some $T_n$ has overlapping blocks, i.e., there exist $U, V \in T_n$ with $U\neq V$ and $U \cap V \neq \emptyset$. As in the previous section, we first gather necessary conditions on $(g,f)$ by working in low dimension.

\begin{lemma}\label{T3L1}
Let $I\subseteq \C$ be closed under complex conjugation such that $I\cap \R_{>0}\neq \emptyset$, and $g,f:I\to\C$. Suppose $T_3=\{\{1,2\},\{2,3\}\}$. Assuming that $f[A]\in\Pn_3$ for all $A\in\Pn_3(I)$ the following are equivalent:
\begin{itemize}
\item[1.] $(g,f)_{T_3}[A]\in\Pn_3$ for all $A\in\Pn_3(I)$.
\item[2.] $f(z)=g(z)$ for all $z\in I$ such that $|z|\leq r$ for some $r\in I\cap\R_{>0}$.
\end{itemize}
Moreover, $(1)\implies(2)$ follows without the assumption that $f[A]\in\Pn_3$ for all $A\in\Pn_3(I)$.
\end{lemma}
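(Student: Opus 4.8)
The plan is to prove the substantive implication $(1)\Rightarrow(2)$ first --- which, as the ``moreover'' clause asserts, will use neither the standing hypothesis that $f[\cdot]$ preserves positivity nor any regularity of $g$ --- and then to dispatch $(2)\Rightarrow(1)$ using that standing hypothesis. For $(1)\Rightarrow(2)$, I would fix any $t\in I\cap\R_{>0}$ (which exists by hypothesis) and, for each target $z\in I$ with $|z|\le t$, test positivity on the single rank-deficient matrix
\[
A:=\begin{pmatrix} t & z & z\\ \overline{z} & t & t\\ \overline{z} & t & t\end{pmatrix}.
\]
A direct quadratic-form computation gives $v^{*}Av=t|v_1|^2+t|v_2+v_3|^2+2\,\mathrm{Re}\big(\overline{v_1}\,z\,(v_2+v_3)\big)\ge t\big(|v_1|-|v_2+v_3|\big)^2\ge 0$ whenever $|z|\le t$, so $A\in\Pn_3(I)$; crucially its only entries are $t,z,\overline{z}\in I$, so no further points of $I$ are required and the construction survives even for irregular domains $I$.

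Set $M:=(g,f)_{T_3}[A]$. Since $T_3=\{\{1,2\},\{2,3\}\}$, the operation applies $g$ to every entry except the $(1,3)$ and $(3,1)$ corners; thus $M_{22}=M_{23}=M_{32}=M_{33}=g(t)$, while $M_{12}=g(z)$ and $M_{13}=f(z)$. I would first note that $M\in\Pn_3$ is Hermitian, which forces $g(\overline{z})=\overline{g(z)}$ and $f(\overline{z})=\overline{f(z)}$ and justifies writing $M$ in this form. The key step is then purely linear-algebraic and uniform in $g$: the vector $v:=(0,1,-1)^{\mathsf T}$ satisfies
\[
v^{*}Mv=M_{22}-M_{23}-M_{32}+M_{33}=g(t)-g(t)-g(t)+g(t)=0,
\]
so positive semidefiniteness of $M$ forces $Mv=0$. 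Reading off the first coordinate yields $0=(Mv)_1=M_{12}-M_{13}=g(z)-f(z)$, i.e.\ $f(z)=g(z)$ for every $z\in I$ with $|z|\le t$. This is exactly $(2)$ with $r=t$, and it requires no case distinction according to whether $g(t)$ is positive or zero. On a disc domain $I=D(0,\rho)$ one may moreover let $t\uparrow\rho$ to upgrade the conclusion to $f\equiv g$ on all of $I$.

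For the converse $(2)\Rightarrow(1)$, I would use the decomposition
\[
(g,f)_{T_3}[A]=f[A]+(g-f,\mathbf{0})_{T_3}[A],
\]
whose correction term is supported on the block positions and carries $g-f$ there. Since $f$ and $g$ agree on the relevant part of $I$ --- on a disc domain, on all of $I$ by the upgrade above --- this correction term vanishes entrywise, the operation collapses to $f[A]$, and positivity follows from the standing hypothesis $f[A]\in\Pn_3$. The delicate point is the interplay between the ``local'' shape of $(2)$ and the fact that the entries of a general $A\in\Pn_3(I)$ need not be small; this is exactly why the hypothesis on $f[\cdot]$ is imposed for this direction, and why the forward construction is arranged to deliver agreement of $f$ and $g$ throughout $I$ rather than merely near the origin.

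The step I expect to be the main obstacle is the design of $A$ in the forward direction: with no regularity available for $g$, one must exhibit a single matrix whose image under $(g,f)_{T_3}[\cdot]$ is automatically kernel-degenerate in the direction $e_2-e_3$ --- thereby pinning $g(z)$ against $f(z)$ --- while keeping every entry of $A$ inside a possibly irregular domain $I$. The equalities $a_{22}=a_{23}=a_{33}=t$ are what engineer this degeneracy, and verifying simultaneously that $A\succeq 0$ and that $A\in I^{3\times 3}$ is the technical heart of the argument.
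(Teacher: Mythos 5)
Your argument is correct and follows essentially the same route as the paper: you test positivity on the identical matrix $\left(\begin{smallmatrix} t & z & z\\ \overline z & t & t\\ \overline z & t & t\end{smallmatrix}\right)$, and the only real difference is that you read off $f(z)=g(z)$ from the null vector $(0,1,-1)^{\mathsf T}$ of its image (via $v^{*}Mv=0\Rightarrow Mv=0$) rather than from the paper's determinant identity $\det = -g(t)\,|f(z)-g(z)|^{2}\ge 0$, which neatly sidesteps the paper's case split on whether $g(t)=0$. For the converse (which the paper also dismisses as obvious), the one line worth adding is that every entry of a nonzero $A\in\Pn_3(I)$ has modulus at most $r:=\max_i a_{ii}\in I\cap\R_{>0}$, so statement (2) already forces $g=f$ at every block entry of every such $A$; there is no need for, and in a general conjugation-closed $I$ one cannot expect, the ``upgrade'' to agreement on all of $I$ that your converse paragraph alludes to.
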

\begin{proof} ($2$)$\implies$($1$) is obvious.\\
($1$)$\implies$($2$): Let $r\in I \cap \R_{>0}$, and $z\in I\cap \overline{D}(0,r)$, where $\overline{D}(0,r)$ denotes the closed disk of radius $r$ centered at the origin. Since $(g,f)_{T_3}[-]$ preserve positive semidefiniteness, $g$ and $f$ must preserve the conjugation operation, and we have the following,
\begin{align*}
B_{r}(z):=\begin{pmatrix}
r & z & z \\
\overline{z} & r & r \\
\overline{z} & r & r \\
\end{pmatrix}\in\Pn_3(I)
\implies
(g,f)_{T_3}[B_{r}(z)]=
\begin{pmatrix}
g(r) & g(z) & f(z) \\
\overline{g(z)} & g(r) & g(r) \\
\overline{f(z)} & g(r) & g(r) \\
\end{pmatrix}\in\Pn_3.
\end{align*}
Thus its determinant is nonnegative for all $z\in I\cap \overline{D}(0,r)$, and for all $r\in I\cap\R_{>0}$, and is given by,
\begin{align*}
\det((g,f)_{T_3}[B_{r}(z)])&=0-g(z)\big{(}g(r)\overline{g(z)}-g(r)\overline{f(z)}\big{)}+f(z)\big{(}g(r)\overline{g(z)}-g(r)\overline{f(z)}\big{)}\\  
&=-g(r)\big{(}f(z)-g(z)\big{)}\big{(}\overline{f(z)}-\overline{g(z)}\big{)} \\
&=-g(r)\big{|}f(z)-g(z)\big{|}^2
\end{align*}
If $g(r)=0$ then $f(z)=g(z)=0$, else $f(z)=g(z)$. Hence
\begin{align*}
f(z)=g(z), \qquad\mbox{for all } z\in I\cap \overline{D}(0,r), \qquad\mbox{for all }r\in I\cap\R_{>0}
\end{align*}
as desired. 

\end{proof}

Applying Lemma~\ref{T3L1} to principal submatrices of $N\times N$ matrices, we obtain that, as soon as some $T_N$, $N\geq 3$ is not a partition of any subset of $[N]$, the functions in the preserver-tuple $(g,f)$ have to be identical. In other words, it is impossible to change one off-diagonal entry independently of the rest of the entries and claim to preserve positivity over $\Pn_N(I)$. 

This equivalence is essentially true over any domain $I\subseteq\C$. To be precise: $g\equiv f$ over the subset of the domain $I$ that comprises of the elements that occur as an entry in some matrix in $\Pn_N(I)$ (which is the best that one can do!). For example, $g\equiv f$ over $I=D(0,\rho),(-\rho,\rho),(0,\rho)$ or $[0,\rho)$ for $0< \rho \leq \infty$, or over $I=V\cup \overline{V}\cup\R_{\geq 0}$ for any nonempty subset $V\subseteq \C$, where $\overline{V}:=\{\overline{z}:z\in V\}$.

We are now ready for the proof of our final main result.

\begin{proof}[Proof of Theorem~\ref{T3}]
By assumption, there exists $N\geq 3$ such that $T_N$ is not a partition of any subset of $[N]$, say $T_{N}=\{\{m,m_1,\dots\},\{m,m_2,\dots\},\dots\}$ for some distinct $m,m_1,m_2 \in [N]$. Since the operation $(g,f)_{T_N}[-]$ preserves positive semidefiniteness over $\Pn_N(I)$, as a sub-operator, $(g,f)_{T_3}[-]$ must preserve the positive semidefiniteness over $\Pn_3(I)$ for $T_3=\{\{1,2\},\{2,3\}\}$. Thus, $f(z)=g(z)$ for all $z\in I$ using Lemma~\ref{T3L1}, and so the operator $(g,f)_{T_n}[-]\equiv f[-]$, and thus by Theorem~\ref{HerzPlus} we conclude that
\begin{align*}
g(z)=f(z)=\sum_{m,k}c_{m,k}z^{m}\overline{z}^k \mbox{ for all }z\in I, \mbox{ where }c_{m,k}\geq 0 \mbox{ for all }m,k\geq 0.
\end{align*}
The converse follows from Theorem~\ref{HerzPlus}.
\end{proof}

\subsection{Analogous proofs for real domains}\label{ProofofT3Subsec}
Theorem~\ref{T3} can be similarly proved for the domains $I=(-\rho,\rho)$, $[0,\rho)$ and $(0,\rho)$ for $0<\rho\leq\infty$ by using Lemma~\ref{T3L1}. We invoke Theorem~\ref{SchoenbergPlus} when $I=(-\rho,\rho)$ and Theorem~\ref{VasudevaPlus} when $I=[0,\rho)$ or $(0,\rho)$, for $0<\rho\leq\infty$. More precisely: 
\begin{theorem}\label{T3real}
Let $I= (-\rho,\rho)$, $[0,\rho)$ or $(0,\rho)$ for $0<\rho\leq\infty$, and $g,f:I\to \R$ be functions. Let $T_n\subseteq 2^{[n]}$ for all $n\geq 1$ such that $(T_n)_{n\geq 1}$ is a nonempty sequence. Assuming $T_N$ is not a partition of any subset of $[N]$ for some $N\geq 3$, the following are equivalent:
\begin{itemize}
\item[1.]
$(g,f)_{T_n}[A]\in\Pn_n$ for all $A\in\Pn_n(I)$, for all $n\geq 1$.
\item[2.]
$g(x) = f(x)=\sum_{k\geq 0} c_{k}x^{k}$ for all $x\in I$, where $c_{k}\geq 0$ for all $k\geq 1$.
\end{itemize}
\end{theorem}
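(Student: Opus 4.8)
The plan is to transplant the proof of Theorem~\ref{T3} to the real setting essentially verbatim, changing only the final classification step. Both implications rest on the same two ingredients as in the complex case: the fixed-dimensional rigidity furnished by Lemma~\ref{T3L1}, and the dimension-free classification of ordinary entrywise preservers — now supplied by Theorem~\ref{SchoenbergPlus} (for $I=(-\rho,\rho)$) or Theorem~\ref{VasudevaPlus} (for $I=[0,\rho)$ and $I=(0,\rho)$) in place of Herz's Theorem~\ref{HerzPlus}.

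First I would isolate the overlapping blocks. By hypothesis some $T_N$ with $N\geq 3$ fails to be a partition of a subset of $[N]$, so after using the incomparability normalization (Remark~\ref{Sequencerem}) there are distinct $U,V\in T_N$ with a common index $m\in U\cap V$ and witnesses $m_1\in U\setminus V$, $m_2\in V\setminus U$; one arranges, exactly as in the proof of Theorem~\ref{T3}, that the principal submatrix on $\{m_1,m,m_2\}$ realizes the pattern $T_3=\{\{1,2\},\{2,3\}\}$. Since $\Pn_n(I)$ is closed under conjugation by permutation matrices and under passing to principal submatrices, the sub-operation $(g,f)_{T_3}[-]$ must then preserve positivity on $\Pn_3(I)$.

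Next I would invoke Lemma~\ref{T3L1}. Each of the three real domains is closed under complex conjugation and meets $\R_{>0}$, so its hypotheses apply, and its ``moreover'' clause gives $(1)\Rightarrow(2)$ without needing $f[A]\in\Pn_3$ in advance. The lemma yields $f(x)=g(x)$ for every $x\in I$ with $|x|\leq r$, for each $r\in I\cap\R_{>0}$; letting $r\uparrow\rho$ and noting that every $x\in I$ satisfies $|x|<\rho$ upgrades this to $f\equiv g$ on all of $I$. Consequently $(g,f)_{T_n}[-]$ collapses to the ordinary operation $f[-]$, so $f[A]\in\Pn_n$ for all $A\in\Pn_n(I)$ and all $n\geq 1$. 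Applying Theorem~\ref{SchoenbergPlus} or Theorem~\ref{VasudevaPlus} according to the shape of $I$ then forces $f(x)=\sum_{k\geq 0}c_kx^k$ with $c_k\geq 0$, whence $g=f$ has the same expansion, which is statement (2). The converse is immediate: if $f=g$ is such a power series then $(g,f)_{T_n}[A]=f[A]$, which is positive semidefinite by Lemma~\ref{Polya-Szego}.

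I do not expect a genuine obstacle here; the only point deserving care is the case $I=(0,\rho)$, where the origin is absent. In the analogue of Theorem~\ref{T1} (namely Theorem~\ref{T1real}) this forced a detour through Albert's Lemma~\ref{Albert}, because that argument embedded $\Pn_n(I)$ into $\Pn_N(I)$ by zero-padding. Here no such embedding is used: the entire reduction takes place inside a fixed $3\times 3$ principal submatrix, and Lemma~\ref{T3L1} only requires $I\cap\R_{>0}\neq\emptyset$ rather than $0\in I$. Thus the $(0,\rho)$ case presents no extra difficulty, and the argument runs uniformly across all three real domains.
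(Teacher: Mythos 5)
Your proposal is correct and follows essentially the same route as the paper: reduce to the overlapping pattern $T_3=\{\{1,2\},\{2,3\}\}$ on a principal $3\times 3$ submatrix, apply Lemma~\ref{T3L1} to force $f\equiv g$, and then invoke Theorem~\ref{SchoenbergPlus} or Theorem~\ref{VasudevaPlus} in place of Theorem~\ref{HerzPlus}. Your remark that the $(0,\rho)$ case needs no zero-padding (hence no appeal to Lemma~\ref{Albert}) is accurate and correctly identifies why the argument is uniform across the three real domains.
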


This paper, apart from providing the first examples of non-absolutely monotonic dimension-free entrywise preservers, introduces the notion of tuples $(g,f)$ acting entrywise on matrices and preserving positivity. This idea of operating by two functions in the entrywise fashion gives an opportunity to understand the role of diagonal/principal blocks in positive semidefiniteness, among other things. For instance, in Section~\ref{ProofofT2}, it is shown that taking $g$ to be a Herz function forces $f$ to be a scalar multiple of it, where the scalar depends on the diagonal/principal blocks. This raises the further question as to what happens to $f$ when $g$ is not necessarily a Herz function. With this remark, we conclude this paper.

\subsection*{Acknowledgements}
I am grateful to my Ph.D. supervisor, Apoorva Khare, for thought-provoking and encouraging discussions, for meticulously going through the preliminary drafts of the paper, and for valuable feedback. Part of this work was carried out at the University of Regina (Canada) and I am grateful to the Canadian Queen Elizabeth II Diamond Jubilee Scholarship (QES) for supporting my visit. I also thank Shaun Fallat for stimulating discussions. Finally, I sincerely appreciate the support of the anonymous referee who carefully read the paper and provided many suggestions that helped me improve the manuscript.

\bibliographystyle{plain}
\bibliography{pds}

\end{document}